\newtheorem{theorem_etc}{theorem_etc}[section]
\newtheorem{theorem}[theorem_etc]{Theorem}
\newtheorem{proposition}[theorem_etc]{Proposition}
\newtheorem{definition}[theorem_etc]{Definition}
\newtheorem{remark}[theorem_etc]{Remark}
\newcommand{\N}{\mathbb{N}}
\newcommand{\Z}{\mathbb{Z}}
\newcommand{\FP}{\operatorname{\mathit{FP}}}
\newcommand{\AP}{\operatorname{\mathit{AP}}}
\begin{document}
\title{Free paratopological groups}

\author{Ali Sayed Elfard}
\address{School of Mathematics and Applied Statistics,
University of Wollongong,
Wollongong, Australia}
\email{ae351@uow.edu.au\textrm{,} a.elfard@yahoo.com}

\keywords{Topological group, Paratopological group, free paratopological group, Alexandroff space, partition space, neighborhood base at the identity}

\subjclass[2010]{Primary 22A30; secondary 54D10, 54E99, 54H99}

\begin{abstract}
Let $\FP(X)$ be the free paratopological group on a~topological space $X$ in the sense of Markov. In this paper, we study the group $\FP(X)$ on a~$P_\alpha$-space $X$ where $\alpha$ is an~infinite cardinal and then we prove that the group $\FP(X)$ is an~Alexandroff space if $X$ is an~Alexandroff space. Moreover, we introduce a~neighborhood base at the identity of the group $\FP(X)$ when the space $X$ is Alexandroff and then we give some properties of this neighborhood base. As applications of these, we prove that the group $\FP(X)$ is $T_0$ if $X$ is $T_0$, we characterize the spaces $X$ for which the group $\FP(X)$ is a~topological group and then we give a~class of spaces $X$ for which the group $\FP(X)$ has the inductive limit property.
\end{abstract}
\maketitle

\section{Introduction}
Let $\FP(X)$ and $\AP(X)$ be the free paratopological group and the free abelian paratopological group, respectively, on a~topological space~$X$ in the sense of Markov. The group $\FP(X)$ is the abstract free group $F_a(X)$ on~$X$ with the strongest paratopological group topology on~$F_a(X)$ that induces the original topology on~$X$ and the abelian group~$\AP(X)$ is the abstract free abelian group~$A_a(X)$ on~$X$ with the strongest paratopological group topology on~$A_a(X)$ that induces the original topology on~$X$. For more information about free paratopological groups, see~(\cite{Romaguera},~\cite{Pyrch1},~\cite{Elfard1},~\cite{Elfard2},~\cite{Elfard3}).

In 1937, P. Alexandroff~\cite{Alexandroff} introduced a class of topological spaces under the name of \textit{Diskrete R\"{a}ume (Discrete space)} which is a space in which an arbitrary intersections of open sets is open. Now the name has been changed to \textit{Alexandroff space} since the discrete space is a space in which every singleton set is open. Recently, researchers have shown an increased interest in studying Alexandroff spaces. This may be due to the important applications of Alexandroff spaces in some areas of mathematical sciences such as the field of computer science.

In this paper, we study the groups $\FP(X)$ and $\AP(X)$ on a $P_\alpha$-space $X$, where $\alpha$ is an infinite cardinal (a topological space $X$ is a \textit{$P_\alpha$-space}, where $\alpha$ is an infinite cardinal if the set $\bigcap \mathscr{C}$ is open in $X$ for each family $\mathscr{C}$ of open subsets of $X$ with $|\mathscr{C}|<\alpha$). Then in Theorem~\ref{Alexandroff4}, we prove that the groups $\FP(X)$ and $\AP(X)$ are Alexandroff spaces if the space $X$ is Alexandroff and in Theorem~\ref{N}, we introduce simple neighborhood bases at the identities of the groups $\FP(X)$ and $\AP(X)$ for their topologies. Moreover, we study the groups $\FP(X)$ and $\AP(X)$ in the case where $X$ is a partition space and in another case where $X$ is a  $T_0$ Alexandroff space.

As applications of these results, in~Theorem~\ref{topgp}, we characterize the spaces $X$ for which the paratopological groups $\FP(X)$ and $\AP(X)$ are topological groups and in~Theorem~\ref{T0}, we prove that the group $\FP(X)$ is $T_0$ if the space $X$ is $T_0$. Finally, in Theorem~\ref{induc}, we give a class of spaces $X$ for which the groups $\FP(X)$ and $\AP(X)$ have the inductive limit property.

The content of this paper is adapted from the author's thesis~\cite{Elfard3}, chapter 3. We remark that the results in~Theorem~\ref{topgp} and~Theorem~\ref{T0} were found independently by the author in his thesis~\cite{Elfard3}. However, similar to these results were found by Pyrch~(\cite{Pyrch2},~\cite{Pyrch3}).

 \section{Definitions and Preliminaries}
 \label{DePr}
A \textit{paratopological group} is a pair $(G, \mathcal{T})$, where~$G$ is a group and $\mathcal{T}$ is a topology on~$G$ such that the mapping $(x, y)\mapsto xy$ of $G\times G$ into~$G$ is continuous. If in addition, the mapping $x\mapsto x^{-1}$ of~$G$ into $G$ is continuous, then $(G, \mathcal{T})$ is a \textit{topological group}.

If $(G, \mathcal{T})$ is a paratopological group, then simply we denote it by~$G$.\\

Marin and Romaguera \cite{MaRo} described a complete neighborhood base at the identity of any paratopological group as follows:

\begin{proposition}
\label{basep}
Let $G$ be a group and let $\mathscr{N}$ be a collection of subsets of~$G$, where each member of~$\mathscr{N}$ contains the identity element $e$ of~$G$. Then the collection~$\mathscr{N}$ is a base at~$e$ for a paratopological group topology  on~$G$ if and only if the following conditions are satisfied:
\begin{enumerate}
\item[(1)] for all $U, V\in \mathscr{N}$, there exists $W\in \mathscr{N}$ such that $W\subseteq U\cap V$;
\item[(2)] for each $U\in \mathscr{N}$, there exists $V\in \mathscr{N}$ such that $V^{2}\subseteq U$;
\item[(3)] for each $U\in \mathscr{N}$ and for each $x\in U$, there exists $V\in \mathscr{N}$ such that $xV\subseteq U$ and $Vx\subseteq U$; and
\item[(4)] for each $U\in \mathscr{N}$ and each $x\in G$, there exists $V\in \mathscr{N}$ such that $xVx^{-1}\subseteq U$.\\
\end{enumerate}
\end{proposition}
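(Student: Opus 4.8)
The plan is to prove both implications, treating the necessity of the four conditions as a routine consequence of the paratopological group axioms and reserving the real work for the sufficiency direction, where an explicit topology must be constructed from $\mathscr{N}$.

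For necessity I would start from a paratopological group topology $\mathcal{T}$ having $\mathscr{N}$ as a base of (open) neighborhoods of $e$. Condition~(1) is immediate, since the neighborhoods of $e$ form a filter, so $U\cap V$ is again a neighborhood and must contain a basic one. Condition~(2) follows from continuity of the multiplication $m\colon G\times G\to G$ at $(e,e)$: because $m(e,e)=e\in U$, there are basic neighborhoods whose product lies in $U$, and shrinking both to a common $V$ via~(1) gives $V^{2}\subseteq U$. For~(3) I would use that the left and right translations $L_{x},R_{x}$ are homeomorphisms (each is continuous with continuous inverse $L_{x^{-1}},R_{x^{-1}}$, which needs only continuity of multiplication, not of inversion): if $U$ is an open neighborhood of $e$ and $x\in U$, then $x^{-1}U$ and $Ux^{-1}$ are open neighborhoods of $e$, each absorbing a basic $V$, so that $xV\subseteq U$ and $Vx\subseteq U$ after intersecting by~(1). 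Finally, (4) comes from the continuity at $e$ of the inner map $y\mapsto xyx^{-1}=L_{x}R_{x^{-1}}(y)$, which fixes $e$, so the preimage of $U$ contains a basic $V$ with $xVx^{-1}\subseteq U$. Note that inversion is never invoked, which is precisely why only the paratopological, not the topological, conclusion is available.

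The substance lies in the sufficiency direction. Assuming (1)--(4), I would define a topology by declaring $O\subseteq G$ open if and only if for every $x\in O$ there is $U\in\mathscr{N}$ with $xU\subseteq O$; equivalently, the sets $\{gU:U\in\mathscr{N}\}$ are to be the basic neighborhoods at $g$. Stability under arbitrary unions is clear, and stability under finite intersections uses~(1). The crucial point is to show each $U\in\mathscr{N}$ is open, so that $\mathscr{N}$ genuinely is a base of open neighborhoods at $e$: writing $\mathrm{int}(U)=\{x:\exists V\in\mathscr{N},\ xV\subseteq U\}$, condition~(3) gives $U\subseteq\mathrm{int}(U)$, while $\mathrm{int}(U)\subseteq U$ since $e\in V$. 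A short argument using the left part of~(3) together with~(1) (choose $V'\subseteq V$ and apply~(3) to points of $xV'$) shows $\mathrm{int}(U)$ is open; hence $U=\mathrm{int}(U)$ is open and contains $e$. That every open set about $e$ contains a member of $\mathscr{N}$ is then immediate from the definition.

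It remains to verify continuity of multiplication, which I expect to be the main obstacle and the place where condition~(4) is essential. Given $g,h\in G$ and an open $O\ni gh$, I would pick $U\in\mathscr{N}$ with $(gh)U\subseteq O$, then $V$ with $V^{2}\subseteq U$ by~(2), then by~(4) applied to $h^{-1}$ a set $V_{1}$ with $h^{-1}V_{1}h\subseteq V$, and finally $V_{2}\subseteq V_{1}\cap V$ by~(1). The decomposition $gv_{2}\,hv_{2}'=gh\,(h^{-1}v_{2}h)\,v_{2}'$ for $v_{2},v_{2}'\in V_{2}$ then places the right-hand factor in $V\cdot V\subseteq U$, so that $(gV_{2})(hV_{2})\subseteq(gh)U\subseteq O$, giving joint continuity at $(g,h)$. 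The conjugation rewriting, and the role of~(4) in absorbing the twist $h^{-1}v_{2}h$ back into a basic neighborhood, is the one genuinely delicate step: conditions (1)--(3) only furnish the underlying topology, whereas (4) is exactly what makes the single product map jointly continuous.
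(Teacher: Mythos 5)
Your proof is correct. Note, however, that the paper itself offers no proof of this proposition: it is quoted as a known result of Marin and Romaguera, so there is nothing internal to compare against. Your argument is the standard one --- necessity from continuity of multiplication and of the translations and inner automorphisms (none of which needs continuity of inversion), and sufficiency by declaring $O$ open iff every $x\in O$ satisfies $xU\subseteq O$ for some $U\in\mathscr{N}$, with the key identity $gv\,hv'=gh\,(h^{-1}vh)\,v'$ and condition (4) absorbing the conjugate. The one step you leave compressed, openness of $\mathrm{int}(U)$, does go through as you indicate: for $x\in\mathrm{int}(U)$ with $xV\subseteq U$, applying the left half of (3) to each $v\in V$ inside $V$ gives $xv\in\mathrm{int}(U)$, so $xV\subseteq\mathrm{int}(U)$; alternatively condition (2) yields the same conclusion in one line via $xwW\subseteq xW^{2}\subseteq xV\subseteq U$.
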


\begin{definition}~\cite{Elfard1}
\label{freeM1}
\begin{rm}
Let $X$ be a subspace of a paratopological group~$G$. Suppose that
\begin{enumerate}
\item[(1)] the set $X$ generates $G$ algebraically, that is, $\langle X\rangle=G$ and
\item[(2)] every continuous mapping $f:X\to H$ of~$X$ to an arbitrary
paratopological group $H$ extends to a continuous homomorphism $\hat{f}:G\to H$.
\end{enumerate}
Then $G$ is called the \textit{Markov free paratopological group on~$X$}, and is denoted by~$\FP(X)$.\\
By substituting ``abelian paratopological group'' for each occurrence of ``paratopological group''  above we obtain the definition of the \textit{Markov free abelian paratopological group on~$X$} and we denote it by $\AP(X)$.
\end{rm}
\end{definition}
 \begin{remark}
\label{strongestpt}
\begin{rm}
We denote the free topology of $\FP(X)$ by $\mathcal{T}_{FP}$ and the free topology of $\AP(X)$ by $\mathcal{T}_{AP}$ and we note that the topologies $\mathcal{T}_{FP}$ and $\mathcal{T}_{AP}$ are the strongest paratopological group topologies on the underlying sets of $\FP(X)$ and $\AP(X)$, respectively, that induce the original topology on $X$.
\end{rm}
\end{remark}

\section{$P_\alpha$-spaces}
Let $X$ be a topological space and $\alpha$ be an infinite cardinal. We say that $X$ is a \textit{$P_\alpha$-space} if the set $\bigcap \mathscr{C}$ is open in $X$ for each family $\mathscr{C}$ of open subsets of $X$ with $|\mathscr{C}|<\alpha$. Let $\tau$ be the topology of $X$. Then we define the topology~$\tau_\alpha$ to be the intersection of all topologies~$\mathcal{O}$ on~$X$ where $\tau\subseteq\mathcal{O}$ and $(X, \mathcal{O})$ is a~$P_\alpha$-space. Since the discrete topology on~$X$ contains~$\tau$ and is a $P_\alpha$-space, $\tau_\alpha$ exists and $(X, \tau_\alpha)$ is a~$P_\alpha$-space. We call the topology~$\tau_\alpha$ the \textit{$P_\alpha$-modification of~$\tau$}.

We note that if $X$ is a $P_{\alpha}$-space, then $X$ is a $P_{\alpha^+}$-space, where $\alpha^+$ is the successor cardinal of $\alpha$.\\

For the remain of this section we assume that $\alpha$ is a fixed infinite cardinal unless we say otherwise.

\begin{theorem}
\label{base1}
Let $(X, \tau)$ be a topological space and let~$\alpha^+$ be the infinite successor cardinal of~$\alpha$.  Then the collection of all sets which are the intersection of fewer than~$\beta$ open subsets of~$X$ is a base for the topology $\tau_\alpha$ on~$X$, where $\beta=\alpha$ if $\alpha$ is regular and $\beta=\alpha^+$ if $\alpha$ is singular.
\end{theorem}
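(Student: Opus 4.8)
The plan is to let $\mathscr{B}$ denote the proposed collection---all intersections of fewer than $\beta$ members of $\tau$---and to show that $\mathscr{B}$ is a base for a topology $\mathcal{O}$ on $X$ which coincides with $\tau_\alpha$. First I would check that $\mathscr{B}$ really is a base: it covers $X$ (as $X\in\tau\subseteq\mathscr{B}$), and it is closed under finite intersections, since the intersection of two members of $\mathscr{B}$ is an intersection of $\kappa_1+\kappa_2$ open sets with $\kappa_1,\kappa_2<\beta$, and $\kappa_1+\kappa_2=\max(\kappa_1,\kappa_2)<\beta$ (here $\beta\ge\alpha\ge\omega$). Thus $\mathscr{B}$ generates a topology $\mathcal{O}$ whose members are the arbitrary unions of elements of $\mathscr{B}$, and clearly $\tau\subseteq\mathcal{O}$.

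The first half is to prove $\tau_\alpha\subseteq\mathcal{O}$; by the minimality of $\tau_\alpha$ it suffices to show that $(X,\mathcal{O})$ is a $P_\alpha$-space. So I would take fewer than $\alpha$ sets $O_\gamma\in\mathcal{O}$ ($\gamma<\delta$, $\delta<\alpha$) and a point $x\in\bigcap_{\gamma<\delta}O_\gamma$; choosing for each $\gamma$ a basic set $B_\gamma\in\mathscr{B}$ with $x\in B_\gamma\subseteq O_\gamma$, it remains to verify that $\bigcap_{\gamma<\delta}B_\gamma\in\mathscr{B}$, i.e. that this set is an intersection of fewer than $\beta$ members of $\tau$. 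Writing each $B_\gamma$ as an intersection of $\kappa_\gamma<\beta$ open sets, the total number involved is $\sum_{\gamma<\delta}\kappa_\gamma$, and the two cases of the theorem are exactly what make this sum stay below $\beta$: if $\alpha$ is regular then $\sum_{\gamma<\delta}\kappa_\gamma\le\delta\cdot\sup_{\gamma}\kappa_\gamma<\alpha=\beta$, because $\delta<\alpha$ and a regular $\alpha$ is not the supremum of fewer than $\alpha$ cardinals each below $\alpha$, whence $\sup_\gamma\kappa_\gamma<\alpha$; if $\alpha$ is singular then $\kappa_\gamma\le\alpha$ gives $\sum_{\gamma<\delta}\kappa_\gamma\le\delta\cdot\alpha=\alpha<\alpha^+=\beta$. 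In either case $x\in\bigcap_{\gamma<\delta}B_\gamma\subseteq\bigcap_{\gamma<\delta}O_\gamma$ with the left set in $\mathscr{B}$, so the intersection is $\mathcal{O}$-open and $(X,\mathcal{O})$ is a $P_\alpha$-space.

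The second half is to prove $\mathcal{O}\subseteq\tau_\alpha$, for which it is enough to show $\mathscr{B}\subseteq\tau_\alpha$, i.e. that $\tau_\alpha$ is closed under intersections of fewer than $\beta$ open sets (these being $\tau$-open, hence $\tau_\alpha$-open as $\tau\subseteq\tau_\alpha$). When $\alpha$ is regular this is immediate from the $P_\alpha$-property of $(X,\tau_\alpha)$ since $\beta=\alpha$. When $\alpha$ is singular I expect the main work: I would first prove the auxiliary fact that \emph{every singular $P_\alpha$-space is a $P_{\alpha^+}$-space}. Given a family of at most $\alpha$ open sets $\{U_\xi:\xi<\alpha\}$, write $\alpha=\bigcup_{i<\mathrm{cf}(\alpha)}A_i$ with each $|A_i|<\alpha$ (possible since $\mathrm{cf}(\alpha)<\alpha$), group the intersection as $\bigcap_{i<\mathrm{cf}(\alpha)}\bigl(\bigcap_{\xi\in A_i}U_\xi\bigr)$, note that each inner intersection is open by the $P_\alpha$-property, and conclude that the outer intersection of $\mathrm{cf}(\alpha)<\alpha$ open sets is open for the same reason. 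This shows $(X,\tau_\alpha)$ is $P_{\alpha^+}$, hence closed under intersections of fewer than $\alpha^+=\beta$ open sets, giving $\mathscr{B}\subseteq\tau_\alpha$.

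Combining the two inclusions yields $\mathcal{O}=\tau_\alpha$, so $\mathscr{B}$ is a base for $\tau_\alpha$. The conceptual heart of the argument---and the reason the threshold $\beta$ must be $\alpha$ versus $\alpha^+$---is the interplay of these two cardinal computations: regularity is precisely the property guaranteeing that a sum of fewer than $\alpha$ cardinals below $\alpha$ stays below $\alpha$, while in the singular case the failure of this is compensated by the bootstrapping of the $P_\alpha$-property up to $P_{\alpha^+}$.
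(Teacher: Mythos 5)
Your proposal is correct and follows essentially the same route as the paper: the same candidate base $\mathscr{B}$ of intersections of fewer than $\beta$ open sets, verification that the topology it generates is a $P_\alpha$-space via the regularity of $\beta$ and the corresponding cardinal arithmetic, and minimality via the observation that for singular $\alpha$ every $P_\alpha$-space is a $P_{\alpha^+}$-space. The only differences are cosmetic: you check the $P_\alpha$-property pointwise where the paper uses a distributive-law identity, and you spell out the cofinality decomposition behind the singular-case bootstrap, which the paper only asserts with ``by the argument above.''
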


\begin{proof}
Let $\tau=\{U_i\}_{i\in I}$. We show that the collection $\mathscr{B}=\{\bigcap_{d\in D}U_d: D\subseteq I$ and $|D|< \beta\}$ of subsets of $X$ is a base for the topology $\tau_\alpha$ on $X$, where $\beta$ as defined in the statement of the theorem. It is well known that every infinite successor cardinal is regular, so in both cases, $\beta$ is regular.

We show first that $\mathscr{B}$ is a base for some topology $\tau^*$ on~$X$. If $x\in X$, then there exists $i_0\in I$ where~$x\in U_{i_0}$ and such that $U_{i_0}\in \mathscr{B}$. Let $B_1, B_2 \in \mathscr{B}$ and let $x\in B_1\cap B_2$. Assume that  $B_1=\bigcap_{d\in D}U_d$ and $B_2=\bigcap_{t\in T}U_t$, where $D, T\subseteq I$ and $|D|, |T|< \beta$. Let $R=D\cup T$. So $|R|< \beta$. Hence $B_3=\bigcap_{r\in R}U_r\in \mathscr{B}$ and $x\in B_3\subseteq B_1\cap B_2$. Therefore, $\mathscr{B}$ is a base for some topology $\tau^*$ on~$X$.

We show second that $(X, \tau^*)$ is a~$P_\alpha$-space. Let $\tau^*=\{V_j\}_{j\in J}$ and let $M\subseteq J$ where $|M|< \beta$. Then we have
\begin{align*}
\bigcap_{m\in M}V_m&=\bigcap_{m\in M}\bigcup_{i\in I_m}B_{m, i}\\
&=\bigcup_{f\colon M\to \bigcup_{m\in M}I_m, f(m)\in I_m \forall m\in M}(\bigcap_{m\in M}B_{m, f(m)})\in \tau^*,
\end{align*}
where $I_m$ is an index set and $B_{m, i}\in \mathscr{B}$ for all $m\in M$ and $i\in I_m$. Thus $\tau^*$ contains $\tau$ and $(X, \tau^*)$ is a $P_\beta$-space, which implies that in both cases of $\beta$, $(X, \tau^*)$ is a~$P_\alpha$-space.

Now let $\hat{\tau}$ be a topology on $X$ containing $\tau$ such that $(X, \hat{\tau})$ is a $P_\alpha$-space. Then in the case where $\alpha$ is regular, we have $\mathscr{B}\subseteq \hat{\tau}$ and in the case where $\alpha$ is singular, by the argument above, we have $(X, \hat{\tau})$ is a~$P_{\alpha^+}$-space, which implies that $\mathscr{B}\subseteq \hat{\tau}$. Thus~$\tau^*\subseteq \hat{\tau}$ and hence $\tau^*$ is the smallest topology on~$X$ containing $\tau$ such that~$(X, \tau^*)$ is a~$P_\alpha$-space. Therefore, $\tau^*=\tau_\alpha$.
\end{proof}

\begin{proposition}
\label{para}
 Let $(G, \tau)$ be a paratopological group. Then $(G, \tau_\alpha)$ is a paratopological group.
 \end{proposition}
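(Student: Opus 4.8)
The plan is to verify directly that the multiplication map $m\colon (G,\tau_\alpha)\times(G,\tau_\alpha)\to(G,\tau_\alpha)$, $m(x,y)=xy$, is continuous; this is the only thing that needs checking, since $\tau_\alpha$ is a genuine topology by construction and $G$ is already a group with $X$-independent algebraic structure. The crucial tool is the explicit base for $\tau_\alpha$ furnished by Theorem~\ref{base1}: every $\tau_\alpha$-open set is a union of sets of the form $\bigcap_{d\in D}U_d$ with each $U_d\in\tau$ and $|D|<\beta$, where $\beta=\alpha$ if $\alpha$ is regular and $\beta=\alpha^+$ if $\alpha$ is singular.

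First I would fix a basic $\tau_\alpha$-open set $W=\bigcap_{d\in D}U_d$ (with $U_d\in\tau$ and $|D|<\beta$) together with a pair $(x,y)$ satisfying $xy\in W$, so that $xy\in U_d$ for every $d\in D$. Since $(G,\tau)$ is a paratopological group, its multiplication is $\tau$-continuous, so for each $d\in D$ there exist $\tau$-open sets $V_d\ni x$ and $V_d'\ni y$ with $V_dV_d'\subseteq U_d$. I would then set $V=\bigcap_{d\in D}V_d$ and $V'=\bigcap_{d\in D}V_d'$. Each is an intersection of $|D|<\beta$ members of $\tau$, hence a basic $\tau_\alpha$-open set by Theorem~\ref{base1}, and plainly $x\in V$ and $y\in V'$.

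The heart of the argument is the inclusion $VV'\subseteq W$: if $a\in V$ and $b\in V'$, then $a\in V_d$ and $b\in V_d'$ for every $d\in D$, so $ab\in V_dV_d'\subseteq U_d$ for every $d$, whence $ab\in\bigcap_{d\in D}U_d=W$. Thus $V\times V'$ is a $\tau_\alpha\times\tau_\alpha$-neighborhood of $(x,y)$ contained in $m^{-1}(W)$, which shows $m^{-1}(W)$ is open; since the sets $W$ of the above form constitute a base for $\tau_\alpha$, it follows that $m$ is continuous and hence that $(G,\tau_\alpha)$ is a paratopological group.

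I do not expect a serious obstacle: the argument amounts to the observation that passing to the $P_\alpha$-modification commutes with the fewer-than-$\beta$ intersections that already witness continuity in $(G,\tau)$. The only points requiring care are (i) confirming that $V$ and $V'$ genuinely lie in $\tau_\alpha$, which is exactly the content of the base description in Theorem~\ref{base1} and relies on the fact that choosing one neighborhood $V_d$ for each of the $|D|<\beta$ indices keeps the total number of intersected $\tau$-open sets below $\beta$, and (ii) noting that it suffices to check preimages of basic open sets. An alternative route would be to exhibit a neighborhood base at the identity — the intersections of fewer than $\beta$ many $\tau$-neighborhoods of $e$ — and verify conditions (1)--(4) of Proposition~\ref{basep}; this works as well, but it carries the extra burden of checking that the resulting group topology coincides with $\tau_\alpha$ rather than merely agreeing with it at $e$, so the direct verification above is the cleaner option.
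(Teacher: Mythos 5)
Your proof is correct and follows essentially the same route as the paper: both arguments take a basic $\tau_\alpha$-open set $\bigcap_{d\in D}U_d$ with $|D|<\beta$ around a product $xy$, use $\tau$-continuity of multiplication to pick $\tau$-open $V_d\ni x$ and $V_d'\ni y$ with $V_dV_d'\subseteq U_d$, and intersect over $D$ to obtain $\tau_\alpha$-neighborhoods whose product lands in the original set. The only difference is cosmetic (notation and your added aside about the alternative via Proposition~\ref{basep}).
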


\begin{proof}
Let $g_1, g_2\in G$ and let $U\in \tau_\alpha$ contains $g_1g_2$. By Theorem~\ref{base1}, there is a set $\Lambda$, where $|\Lambda|< \beta$ and $\beta$ is as in the theorem such that $g_1g_2\in \bigcap_{\lambda\in \Lambda}U_\lambda\subseteq U$  where $U_\lambda\in \tau$ for all $\lambda\in \Lambda$. Thus $g_1g_2\in U_\lambda$ for all $\lambda\in \Lambda$. Since $\tau$ is a paratopological group topology on~$G$, for each $\lambda\in \Lambda$, there are~$V(\lambda), W(\lambda)\in \tau$ containing~$g_1, g_2$, respectively, such
 that $V(\lambda)W(\lambda)\subseteq U_\lambda$. Let~$U_1=\bigcap_{\lambda\in \Lambda}V(\lambda)$ and~$U_2=\bigcap_{\lambda\in \Lambda}W(\lambda)$. Then $U_1U_2\subseteq U_\lambda$ for all~$\lambda\in \Lambda$. Hence, $U_1, U_2\in \tau_\alpha$ and $U_1U_2\subseteq \bigcap_{\lambda\in \Lambda}U_\lambda\subseteq U$.
  Therefore, $\tau_\alpha$ is a paratopological group topology on $G$.
\end{proof}

\begin{proposition}
\label{pspace}
 Let $X$ be a topological space. Then the group~$\FP(X)$ is a $P_\alpha$-space if and only if the space $X$ is a $P_\alpha$-space.
\end{proposition}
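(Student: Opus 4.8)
The plan is to prove the two implications separately, since one direction is routine while the other carries the real content. For the ``only if'' direction I would assume $\FP(X)$ is a $P_\alpha$-space and use that $X$ embeds as a topological subspace of $\FP(X)$ (the free topology $\mathcal{T}_{FP}$ induces the original topology on $X$ by Remark~\ref{strongestpt}), together with the elementary fact that being a $P_\alpha$-space is hereditary: if $\{O_i\}_{i\in I}$ is a family of sets open in $X$ with $|I|<\alpha$, write $O_i=U_i\cap X$ with $U_i\in\mathcal{T}_{FP}$; then $\bigcap_{i\in I}O_i=\bigl(\bigcap_{i\in I}U_i\bigr)\cap X$, and $\bigcap_{i\in I}U_i$ is open in $\FP(X)$ because $\FP(X)$ is a $P_\alpha$-space, so $\bigcap_{i\in I}O_i$ is open in $X$. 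Hence $X$ is a $P_\alpha$-space.

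For the ``if'' direction I would assume $X$ is a $P_\alpha$-space and pass to the $P_\alpha$-modification $(\mathcal{T}_{FP})_\alpha$ of the free topology on the underlying group $F_a(X)$. By construction $(F_a(X),(\mathcal{T}_{FP})_\alpha)$ is a $P_\alpha$-space with $(\mathcal{T}_{FP})_\alpha\supseteq\mathcal{T}_{FP}$, and by Proposition~\ref{para} the topology $(\mathcal{T}_{FP})_\alpha$ is again a paratopological group topology. The strategy is then to show that the modification changes nothing, i.e. $(\mathcal{T}_{FP})_\alpha=\mathcal{T}_{FP}$. Granting this, $\FP(X)$ is itself a $P_\alpha$-space. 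To obtain the equality I would exploit that $\mathcal{T}_{FP}$ is the \emph{strongest} paratopological group topology on $F_a(X)$ inducing the original topology $\tau$ on $X$ (Remark~\ref{strongestpt}): once I know $(\mathcal{T}_{FP})_\alpha$ still induces $\tau$ on $X$, maximality of $\mathcal{T}_{FP}$ forces $(\mathcal{T}_{FP})_\alpha\subseteq\mathcal{T}_{FP}$, and the reverse inclusion is automatic.

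The hard part will therefore be the verification that $(\mathcal{T}_{FP})_\alpha|_X=\tau$. The inclusion $\tau\subseteq(\mathcal{T}_{FP})_\alpha|_X$ is immediate since the modification only enlarges the topology, so the work is in the reverse inclusion. Here I would invoke Theorem~\ref{base1} to write a basic $(\mathcal{T}_{FP})_\alpha$-open set as $W=\bigcap_{i\in I}U_i$ with each $U_i\in\mathcal{T}_{FP}$ and $|I|<\beta$, where $\beta=\alpha$ when $\alpha$ is regular and $\beta=\alpha^+$ when $\alpha$ is singular. Then $W\cap X=\bigcap_{i\in I}(U_i\cap X)$ is an intersection of fewer than $\beta$ members of $\tau$. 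When $\alpha$ is regular this is an intersection of fewer than $\alpha$ open sets, hence $\tau$-open because $X$ is a $P_\alpha$-space; when $\alpha$ is singular I would first upgrade the hypothesis, using the standard fact already used in the proof of Theorem~\ref{base1} that for singular $\alpha$ a $P_\alpha$-space is automatically a $P_{\alpha^+}$-space, so the intersection is again $\tau$-open. Passing to unions gives $W\cap X\in\tau$ for every $(\mathcal{T}_{FP})_\alpha$-open $W$, which yields $(\mathcal{T}_{FP})_\alpha|_X\subseteq\tau$ and closes the argument.
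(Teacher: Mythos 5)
Your proof is correct and takes essentially the same route as the paper: both directions rely on heredity of the $P_\alpha$ property for the ``only if'' part, and on passing to the $P_\alpha$-modification $(\mathcal{T}_{FP})_\alpha$, invoking Proposition~\ref{para} and the maximality of $\mathcal{T}_{FP}$, for the ``if'' part. The only difference is that the paper compresses the key verification into the unproved identity $(\mathcal{T}_{FP})_\alpha|_X=(\mathcal{T}_{FP}|_X)_\alpha$, whereas you supply the needed inclusion explicitly via the base from Theorem~\ref{base1}, which is a welcome filling-in of detail rather than a different argument.
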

\begin{proof}

$\Longrightarrow$: It is easy to prove that $X$ is a $P_\alpha$-space.

$\Longleftarrow$: Let $\tau$ be the topology of $X$ and let $\mathcal{T}_{FP}$ be the free topology of $\FP(X)$. We show that
$(\mathcal{T}_{FP})_\alpha=\mathcal{T}_{FP}$. By Proposition~\ref{para}, $(\mathcal{T}_{FP})_\alpha$ is a paratopological group topology on $F_a(X)$ and it is stronger than $\mathcal{T}_{FP}$. However, $\mathcal{T}_{FP}$ is the free paratopological group topology on $F_a(X)$, which is the strongest paratopological group topology on $F_a(X)$ inducing the original topology $\tau$ on $X$. Since $(\mathcal{T}_{FP})_\alpha|_ X=(\mathcal{T}_{FP}|_ X)_\alpha$ and $(\mathcal{T}_{FP}|_ X)_\alpha=(\tau)_\alpha=\tau$,  $(\mathcal{T}_{FP})_\alpha$ induces the topology $\tau$ of $X$. Thus we have $(\mathcal{T}_{FP})_\alpha=\mathcal{T}_{FP}$ and therefore, $\FP(X)$ is a $P_\alpha$-space.
\end{proof}
The same result of Proposition~\ref{pspace} is true for $\AP(X)$.

\section{Free paratopological groups on Alexandroff spaces}

A topological space $X$ is said to be \textit{Alexandroff} \cite{Arenas} if the intersection of every family of open subsets of~$X$ is open in $X$.

We note that a topological space $X$ is Alexandroff if and only if $X$ is a~$P_\alpha$-space for every infinite cardinal~$\alpha$. Thus by using Proposition~\ref{pspace}, we get the next result.
\begin{theorem}
\label{Alexandroff4}
The group $\FP(X)$ ($\AP(X)$) on a~space $X$ is an Alexandroff space if and only if $X$ is an Alexandroff space.
\end{theorem}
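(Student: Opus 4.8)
The plan is to reduce the claim directly to Proposition~\ref{pspace} (and its abelian counterpart) by way of the characterization of Alexandroff spaces recorded just above the theorem statement. First I would invoke the observation that a topological space $Y$ is Alexandroff if and only if $Y$ is a $P_\alpha$-space for every infinite cardinal $\alpha$. This is immediate from the definitions: being Alexandroff asserts that \emph{arbitrary} intersections of open sets are open, while being $P_\alpha$ for every infinite $\alpha$ asserts that every intersection of fewer than $\alpha$ open sets is open for each such $\alpha$; as $\alpha$ ranges over all infinite cardinals, this covers every possible cardinality of a family of open subsets, so the two conditions coincide.

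With this characterization in place, the proof is a chaining of equivalences. For the $\FP$ case, $\FP(X)$ is Alexandroff if and only if $\FP(X)$ is a $P_\alpha$-space for every infinite cardinal $\alpha$; by Proposition~\ref{pspace} this holds, for each fixed $\alpha$, exactly when $X$ is a $P_\alpha$-space; and quantifying over all infinite $\alpha$ then yields that $X$ is a $P_\alpha$-space for every such $\alpha$, i.e. that $X$ is Alexandroff. The $\AP$ case is verbatim the same, using the $\AP$-analogue of Proposition~\ref{pspace} noted immediately after that proposition.

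I expect no genuine obstacle here, since the substantive work was already carried out in Proposition~\ref{pspace}; the present theorem is its packaging under the Alexandroff hypothesis. The only point deserving a moment's care is threading the quantifier ``for every infinite cardinal $\alpha$'' through the biconditional: because Proposition~\ref{pspace} is an equivalence holding separately for each fixed $\alpha$, one is entitled to take the conjunction over all infinite $\alpha$ on both sides, and this conjunction reproduces precisely the Alexandroff condition on each of $\FP(X)$ and $X$.
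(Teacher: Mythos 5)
Your proposal is correct and follows exactly the paper's own argument: the paper proves the theorem by noting that a space is Alexandroff if and only if it is a $P_\alpha$-space for every infinite cardinal $\alpha$, and then applying Proposition~\ref{pspace} (and its $\AP$ analogue) for each $\alpha$. The quantifier-threading point you flag is handled implicitly in the paper, and your justification of it is sound.
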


Let $G$ be a group and let $H$ be a submonoid of $G$. Then we say that $H$ is a \textit{normal} submonoid of $G$ if $ghg^{-1}\in H$ for all $h\in H$ and $g\in G$.

\begin{proposition}
\label{submonoid}
 If $H$ is a normal submonoid of a group $G$, then $\{H\}$ is a neighborhood base at the identity of $G$ for a paratopological group topology on $G$.
\end{proposition}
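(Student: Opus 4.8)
The plan is to invoke the Marin--Romaguera criterion, Proposition~\ref{basep}, taking $\mathscr{N}=\{H\}$ and verifying each of its four conditions directly. First I would record that, since $H$ is a submonoid, it contains the identity element $e$ of $G$; this is exactly the standing hypothesis of Proposition~\ref{basep} that every member of $\mathscr{N}$ contain $e$, so $\{H\}$ is an admissible candidate for a neighborhood base at $e$.

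Next I would check the four conditions in turn, noting that because $\mathscr{N}$ has a single member the only possible choice at each step is $V=H$ (and $W=H$). Condition~(1) is immediate, since the only relevant case is $U=V=H$ and $H\subseteq H\cap H$. Condition~(2) reduces to $H^{2}\subseteq H$, which is precisely the closure of the submonoid $H$ under multiplication. For condition~(3), given $x\in H$ I would use the same closure property to obtain $xH\subseteq H$ and $Hx\subseteq H$. Finally, condition~(4) is the one place where the normality hypothesis is used: for an arbitrary $x\in G$, taking $V=H$ gives $xHx^{-1}\subseteq H$ by the very definition of a normal submonoid. With all four conditions verified, Proposition~\ref{basep} produces a paratopological group topology on $G$ for which $\{H\}$ is a base at the identity.

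There is no substantial obstacle in this argument: each of the four conditions is either trivial for a one-element base (conditions~(1) and, partly, (3)) or corresponds verbatim to one of the two defining properties of a normal submonoid, with conditions~(2) and~(3) drawing on closure under multiplication and condition~(4) drawing on normality. The only point worth keeping in mind is that the submonoid assumption is what supplies the identity element required by the hypothesis of Proposition~\ref{basep}, so that the criterion may be applied at all.
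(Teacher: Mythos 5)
Your proof is correct and follows exactly the paper's route: the paper likewise just invokes Proposition~\ref{basep} with $\mathscr{N}=\{H\}$, though it asserts the four conditions without spelling them out as you do. Your explicit verification (conditions (1)--(3) from closure of the submonoid under multiplication and $e\in H$, condition (4) from normality) fills in precisely the details the paper leaves to the reader.
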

\begin{proof}
Let $H$ be a normal submonoid of $G$. Then $\{H\}$ satisfies the conditions of Proposition~\ref{basep}, and therefore, $\{H\}$ is a neighborhood base at the identity of $G$ for a paratopological group topology on~$G$.
\end{proof}
Let $X$ be a topological space. For each $x\in X$, let $U(x)=\bigcap\{U\colon x\in U$ and $U$ is open$\}$. Then it is easy to see that the space $X$ is Alexandroff if and only if  the set $U(x)$ is open in $X$ for each $x\in X$.
%

Let $X$ be an Alexandroff space and let $\FP(X)$ and $\AP(X)$ be the free paratopological group and the free abelian paratopological group, respectively, on $X$. Let $U_A=\bigcup_{x\in X}(U(x)-x)\subseteq\AP(X)$. Then we define $N_A$ to be the smallest submonoid of $\AP(X)$ containing the set $U_A$. So $N_A$ is of the form
\begin{center}$N_A=\{y_1-x_1+y_2-x_2+\cdots+y_n-x_n\colon x_i\in X, y_i\in U(x_i)$ for all $i=1, 2, \ldots, n, n\in \N\}$.\end{center}
Or simply, we write $N_A=\langle U_A\rangle$. Since every submonoid of an abelian group is normal, $N_A$ is normal. However, in this case, we will omit the word normal and say submonoid.

Let $\mathcal{N}_A=\{N_A\}$. Since $N_A$ is a submonoid of $\AP(X)$, by Proposition~\ref{submonoid}, $\mathcal{N}_A$ is a neighborhood base at the identity $0_A$ of $\AP(X)$  for a paratopological group topology $\mathcal{O}_A$ on $A_a(X)$.

Now for the group $\FP(X)$,  let $U_F=\bigcup_{x\in X}x^{-1}U(x)\subseteq \FP(X)$ and then we define $N_F$ to be the smallest normal submonoid of $\FP(X)$ containing the set $U_F$. The normal submonoid $N_F$ consists exactly of the set of all elements of the form,
$$w=g_1x_1^{-1}y_1g_1^{-1}\cdot g_2x_2^{-1}y_2g_2^{-1}\cdots g_nx_n^{-1}y_ng_n^{-1}$$
where $n\in \Bbb N$, $g_1, g_2, \ldots, g_n$ is an arbitrary finite system of elements of $F_a(X)$ and $x_1^{-1}y_1, x_2^{-1}y_2, \ldots, x_n^{-1}y_n$ is an arbitrary finite system of elements of $U_F$. Define $\mathcal{N}_F=\{N_F\}$. By Proposition~\ref{submonoid}, $\mathcal{N}_F$ is a neighborhood base at the identity $e$ of $\FP(X)$ for a paratopological group topology $\mathcal{O}_F$ on the free group $F_a(X)$.

\begin{proposition}
\label{finer1}
The topologies $\mathcal{O}_F$ and $\mathcal{O}_A$  induce topologies coarser than the original topology on $X$.
\end{proposition}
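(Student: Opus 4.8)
The plan is to show that the inclusion map $(X,\tau)\hookrightarrow(\FP(X),\mathcal{O}_F)$ is continuous, where $\tau$ denotes the original topology of $X$. This is precisely the assertion that $\mathcal{O}_F$ induces on $X$ a topology coarser than $\tau$: continuity of the inclusion means exactly that $\iota^{-1}(W)=W\cap X$ is $\tau$-open for every $\mathcal{O}_F$-open set $W$, i.e.\ $\mathcal{O}_F|_X\subseteq\tau$. The case of $\AP(X)$ and $\mathcal{O}_A$ is handled by the same argument written additively, so I would concentrate on $\FP(X)$ and then transcribe.

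First I would record the shape of the basic open sets. Since $\mathcal{O}_F$ is the paratopological group topology whose neighborhood base at the identity $e$ is $\mathcal{N}_F=\{N_F\}$ (Proposition~\ref{submonoid}, via Proposition~\ref{basep}), and since in any paratopological group left translations are homeomorphisms, the collection $\{gN_F\colon g\in F_a(X)\}$ is a base for $\mathcal{O}_F$. Hence it suffices to prove the following local statement: for every $x_0\in X$ and every $g\in F_a(X)$ with $x_0\in gN_F$, there is a $\tau$-open set $V$ with $x_0\in V\subseteq gN_F$.

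The key step is to take $V=U(x_0)$, the smallest $\tau$-open set containing $x_0$, which is $\tau$-open precisely because $X$ is Alexandroff. For any $x\in U(x_0)$ we have $x_0^{-1}x\in x_0^{-1}U(x_0)\subseteq U_F\subseteq N_F$ by the definition of $U_F$, while the hypothesis $x_0\in gN_F$ gives $g^{-1}x_0\in N_F$. Because $N_F$ is a submonoid, the factorization $g^{-1}x=(g^{-1}x_0)(x_0^{-1}x)$ shows $g^{-1}x\in N_F$, that is, $x\in gN_F$. Thus $U(x_0)\subseteq gN_F$, which establishes continuity at $x_0$. For $\AP(X)$ the identical computation in additive notation uses $x-x_0\in U(x_0)-x_0\subseteq U_A\subseteq N_A$ together with $x_0-g\in N_A$ to conclude $x-g=(x_0-g)+(x-x_0)\in N_A$, i.e.\ $x\in g+N_A$, whence $U(x_0)\subseteq g+N_A$.

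I do not expect a serious obstacle here; the proposition is essentially a bookkeeping check. The one genuinely substantive point is the observation $x_0^{-1}x\in U_F$ for $x\in U(x_0)$, which is what links the monoid generators $U_F$ (resp.\ $U_A$) to the minimal open neighborhoods $U(x)$ supplied by the Alexandroff hypothesis; once this is in place the monoid closure of $N_F$ (resp.\ $N_A$) does the rest. The only other item to state cleanly is the standard fact that the left translates of the single base member $N_F$ form a base for $\mathcal{O}_F$, so that it genuinely suffices to test continuity against the sets $gN_F$.
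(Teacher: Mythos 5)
Your proof is correct, and since the paper states Proposition~\ref{finer1} without any proof, your argument supplies exactly the intended verification: the key points --- that $\{gN_F\colon g\in F_a(X)\}$ is a base for $\mathcal{O}_F$, that $x_0^{-1}x\in U_F\subseteq N_F$ for $x\in U(x_0)$, and that the submonoid property turns $g^{-1}x=(g^{-1}x_0)(x_0^{-1}x)$ into membership in $N_F$ --- are all sound, and they match the mechanism the paper itself uses later when it observes that $g_1\in g_2N_F$ implies $g_1N_F\subseteq g_2N_FN_F=g_2N_F$. The additive transcription for $\mathcal{O}_A$ is likewise correct.
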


\begin{theorem}
\label{N}
The collection $\mathcal{N}_F$ ($\mathcal{N}_A$) is a neighborhood base at $e$ ($0_A$) for the free topology of $\FP(X)$ ($\AP(X)$).
\end{theorem}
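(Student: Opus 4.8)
The plan is to identify $N_F$ with the smallest $\mathcal{T}_{FP}$-open neighborhood of $e$. Since $X$ is Alexandroff, Theorem~\ref{Alexandroff4} guarantees that $\FP(X)$ is Alexandroff as well, so $e$ has a minimal $\mathcal{T}_{FP}$-open neighborhood $W=\bigcap\{O\colon e\in O\in\mathcal{T}_{FP}\}$, and $\{W\}$ is by itself a neighborhood base at $e$. Hence it suffices to prove $N_F=W$, and then $\mathcal{N}_F=\{N_F\}$ is a neighborhood base at $e$ for $\mathcal{T}_{FP}$. I would first record that $W$ is a normal submonoid of $F_a(X)$: continuity of multiplication at $(e,e)$ together with the minimality of $W$ forces $WW\subseteq W$, and continuity at $e$ of each conjugation $x\mapsto gxg^{-1}$ (a composite of a left and a right translation, both continuous) together with minimality forces $gWg^{-1}\subseteq W$ for every $g\in F_a(X)$.

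Next I would prove $N_F\subseteq W$. Because left translations are homeomorphisms of $(\FP(X),\mathcal{T}_{FP})$, the minimal $\mathcal{T}_{FP}$-open neighborhood of a point $x\in X$ is $xW$; since $\mathcal{T}_{FP}$ induces the original topology on $X$, the trace $xW\cap X$ is exactly the minimal original neighborhood $U(x)$. Therefore $x^{-1}U(x)=W\cap x^{-1}X\subseteq W$ for every $x\in X$, whence $U_F=\bigcup_{x\in X}x^{-1}U(x)\subseteq W$. As $W$ is a normal submonoid containing $U_F$ and $N_F$ is by definition the smallest such, $N_F\subseteq W$.

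For the reverse inclusion I would use Proposition~\ref{finer1} together with the universal property of $\FP(X)$. Proposition~\ref{finer1} says that $\mathcal{O}_F$ induces on $X$ a topology coarser than the original one, which is precisely the statement that the inclusion $\iota\colon X\to (F_a(X),\mathcal{O}_F)$ is continuous. Since $(F_a(X),\mathcal{O}_F)$ is a paratopological group, Definition~\ref{freeM1} extends $\iota$ to a continuous homomorphism $\hat{\iota}\colon\FP(X)\to(F_a(X),\mathcal{O}_F)$; but $\hat{\iota}$ fixes every generator in $X$, so $\hat{\iota}$ is the identity map of $F_a(X)$. Thus the identity $(F_a(X),\mathcal{T}_{FP})\to(F_a(X),\mathcal{O}_F)$ is continuous, i.e. $\mathcal{O}_F\subseteq\mathcal{T}_{FP}$. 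Comparing minimal neighborhoods at $e$ (a coarser topology has the larger minimal neighborhood, and $N_F$ is the minimal $\mathcal{O}_F$-neighborhood of $e$) yields $W\subseteq N_F$.

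Combining the two inclusions gives $N_F=W$, which proves that $\mathcal{N}_F$ is a neighborhood base at $e$ for $\mathcal{T}_{FP}$. The abelian case is entirely analogous: in additive notation $W_A=\bigcap\{O\colon 0_A\in O\in\mathcal{T}_{AP}\}$ is a submonoid (automatically normal), one gets $U(x)-x=W_A\cap(X-x)\subseteq W_A$ and hence $N_A\subseteq W_A$, while Proposition~\ref{finer1} with the universal property gives $W_A\subseteq N_A$. The step I expect to be the main obstacle is the inclusion $\mathcal{O}_F\subseteq\mathcal{T}_{FP}$: the key realization is that ``$\mathcal{O}_F$ induces a coarser topology on $X$'' is exactly continuity of the inclusion into $(F_a(X),\mathcal{O}_F)$, so that the free-group universal property applies and forces the identity homomorphism to be continuous in the needed direction; the remaining work is the Alexandroff bookkeeping that converts a comparison of topologies into the reverse inclusion of minimal neighborhoods.
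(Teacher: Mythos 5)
Your proof is correct, and it checks out in every step I can test (the minimality argument does give $WW\subseteq W$ and $gWg^{-1}\subseteq W$, the identification $xW\cap X=U(x)$ is valid because $\mathcal{T}_{FP}$ induces the original topology and left translations are homeomorphisms, and there is no circularity since Theorem~\ref{Alexandroff4} is established independently of Theorem~\ref{N}). However, it takes a genuinely different route from the paper for the substantive half of the argument. The paper never mentions minimal neighborhoods: it proves that $\mathcal{O}_F$ is finer than $\mathcal{T}_{FP}$ by a direct verification of the extension property, namely a computation showing that for every continuous $\xi\colon X\to G$ into a paratopological group and every neighborhood $V$ of $e_G$ one has $\hat{\xi}(N_F)\subseteq V$; this occupies almost the entire proof, and the conclusion $\mathcal{O}_F=\mathcal{T}_{FP}$ then follows from Proposition~\ref{finer1} and the maximality of the free topology. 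You replace that computational half --- which in your formulation is the inclusion $N_F\subseteq W$ --- by the observation that the minimal $\mathcal{T}_{FP}$-open neighborhood $W$ of $e$ in the Alexandroff group $\FP(X)$ is automatically a normal submonoid containing $U_F$, so that $N_F\subseteq W$ by minimality of $N_F$ among such submonoids; your other inclusion $W\subseteq N_F$ is obtained essentially as in the paper, from Proposition~\ref{finer1} together with the universal property. What your approach buys: it is shorter, it avoids the $\hat{\xi}(N_F)\subseteq V$ bookkeeping entirely, and it yields the sharper conclusion that $N_F$ is exactly the minimal open neighborhood of $e$ in $\FP(X)$. What it costs: it depends on Theorem~\ref{Alexandroff4}, and hence on the $P_\alpha$-modification machinery of Section~3, whereas the paper's computation is self-contained and would let one prove Theorem~\ref{N} independently of that section.
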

\begin{proof}
We prove the theorem for $\mathcal{N}_F$, since the proof for $\mathcal{N}_A$ is the same. We show first that the topology $\mathcal{O}_F$ is finer than the free topology $\mathcal{T}_{FP}$ of $\FP(X)$. Let $\xi\colon X\to G$ be a continuous mapping of the space $X$ into an arbitrary paratopological group $G$. Then $\xi$ extends to a homomorphism $\hat{\xi}\colon F_a(X)\to G$. We show that $\hat{\xi}$ is continuous with respect to the topology $\mathcal{O}_F$. Let $V$ be a neighborhood of $\hat{\xi}(e)=e_G$ in $G$. Fix $x\in X$. Then $\xi(x)V$ is a neighborhood of $\xi(x)$ in $G$. Since $\xi$ is continuous at $x$, $\xi(U(x))\subseteq \xi(x)V$ and Since $\hat{\xi}|_X=\xi$,  $\hat{\xi}(U(x))\subseteq \hat{\xi}(x)V$. Because $\hat{\xi}$ is a homomorphism, $\hat{\xi}\big(x^{-1}U(x)\big)\subseteq V$. Since  $x$ is any point in $X$, we have
\begin{equation}
\label{eq4}
\hat{\xi}\big(\bigcup_{x\in X}x^{-1}U(x)\big)\subseteq V.
\end{equation}
Fix $n\in \N$. Then there exists a neighborhood $U$ of $e_G$ in $G$ such that $U^n\subseteq V$ and also, for all $g\in F_a(X)$, there exists a neighborhood $W$ of $e_G$ in $G$ such that $\hat{\xi}(g)W\big(\hat{\xi}(g)\big)^{-1}\subseteq U$. Since $V$ is any neighborhood of $e_G$ in $G$, from~(\ref{eq4}), we have $\hat{\xi} \big(\bigcup_{x\in X}x^{-1}U(x)\big) \subseteq W$. Fix $g\in F_a(X)$. So we have $$\hat{\xi}(g)\hat{\xi} \big(\bigcup_{x\in X}x^{-1}U(x)\big)\big(\hat{\xi}(g)\big)^{-1} \subseteq \hat{\xi}(g)W\hat{\xi}(g)^{-1}.$$

Since $\hat{\xi}$ is a homomorphism,
\begin{equation}
\label{eq5}
\hat{\xi} \big(\bigcup_{x\in X}gx^{-1}U(x)g^{-1}\big) \subseteq \hat{\xi}(g)W\hat{\xi}(g)^{-1}\subseteq U.
\end{equation}
  Since~(\ref{eq5}) holds for every $g\in F_a(X)$, we have
$$\hat{\xi} \big(\bigcup_{g\in F_a(X)}\bigcup_{x\in X}gx^{-1}U(x)g^{-1}\big) \subseteq U.$$
Thus we have $$\hat{\xi} \Big(\big(\bigcup_{g\in F_a(X)}\bigcup_{x\in X}gx^{-1}U(x)g^{-1}\big)^n\Big) \subseteq U^n\subseteq V.$$
Since $n$ is any element of $\N$,
       $$\hat{\xi} \Big(\bigcup_{n\in \N}\big(\bigcup_{g\in F_a(X)}\bigcup_{x\in X}gx^{-1}U(x)g^{-1}\big)^n\Big) \subseteq V.$$
     Since $N_F=\bigcup_{n\in \N}\big(\bigcup_{g\in F_a(X)}\bigcup_{x\in X}gx^{-1}U(x)g^{-1}\big)^n$, we have  $\hat{\xi}(N_F)\subseteq V$. Thus $\hat{\xi}$ is continuous with respect to the topology $\mathcal{O}_F$ and therefore, $\mathcal{O}_F$ is finer than $\mathcal{T}_{FP}$.
By Proposition~\ref{finer1}, $\mathcal{O}_F|_X$ is coarser than the original topology on $X$. Since $\mathcal{O}_F$ is finer that $\mathcal{T}_{FP}$, $\mathcal{O}_F|_X$ induces the original topology on $X$. Thus we satisfied the conditions of Definition~\ref{freeM1}, which implies that $\mathcal{O}_F=\mathcal{T}_{FP}$. Therefore, $\mathcal{N}_F$ is a neighborhood base at $e$ of the group $\FP(X)$.
\end{proof}
Now let $\mathscr{H}_F=\{gN_F: g\in F_a(X)\}$ and let $\mathscr{H}_A=\{g+N_A: g\in A_a(X)\}$. If $g_1, g_2\in \FP(X)$ such that $g_1\in g_2N_F$, then we have $g_1N_F\subseteq g_2N_FN_F=g_2N_F$. A similar result is true for~$\mathscr{H}_A$.\\

Let $X$ be a set. Then for all $k\in \Bbb Z$, we define $Z_k(X)=\{x_1^{\epsilon_1}x_2^{\epsilon_2}\ldots x_n^{\epsilon_n}\in F_a(X): \sum_{i=1}^n \epsilon_i=k\}$ and $Z^A_k(X)=\{\epsilon_1x_1+\epsilon_2x_2+\cdots +\epsilon_nx_n\in A_a(X): \sum_{i=1}^n \epsilon_i=k\}$.
For every $k_1, k_2\in \Bbb Z$ and $k_1\neq k_2$, the sets $Z_{k_1}(X)$ and $Z_{k_2}(X)$ are disjoint and the sets $Z^A_{k_1}(X)$ and $Z^A_{k_2}(X)$ are disjoint. The set $Z_0(X)$ is the smallest normal subgroup of $F_a(X)$ containing the set $Z_F=\bigcup_{x\in X}x^{-1}X$ and the set $Z^A_0(X)$ is the smallest subgroup of $A_a(X)$ containing the set $Z_A=\bigcup_{x\in X}X-x$.\\

Let $X$ be a topological space and let $I\colon X\to \AP(X)$ be the identity mapping of the space $X$ to the abelian group $\AP(X)$. Then we extend $I$ to the continuous homomorphism  mapping $\hat{I}\colon FP(X)\to AP(X)$. We call the mapping $\hat{I}$ the \textit{canonical mapping}.

\begin{theorem}
\label{Z0}
Let $X$ be an Alexandroff space. Then the following are equivalent.
\begin{enumerate}
\item[(1)] The space $X$ is indiscrete.
\item[(2)] $N_F=Z_0(X)$ in $FP(X)$.
\item[(3)] $N_A=Z_0^A(X)$ in $\AP(X)$.
\end{enumerate}
\end{theorem}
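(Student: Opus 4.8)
The plan is to first record the two inclusions that hold unconditionally, namely $N_F \subseteq Z_0(X)$ and $N_A \subseteq Z_0^A(X)$, and then to show that the reverse inclusions hold precisely when $X$ is indiscrete. Every generator $x^{-1}y$ of $U_F$ with $y \in U(x)$ has exponent sum $0$, so $U_F \subseteq Z_F \subseteq Z_0(X)$; since $Z_0(X)$ is a normal subgroup, hence a normal submonoid, containing $U_F$, the minimality of $N_F$ gives $N_F \subseteq Z_0(X)$, and the same argument gives $N_A \subseteq Z_0^A(X)$. Thus the real content of $(2)$ and $(3)$ is the reverse inclusion, and the theorem reduces to deciding when it holds.

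For $(1) \Rightarrow (2)$ and $(1) \Rightarrow (3)$, I would use that if $X$ is indiscrete then the only nonempty open set is $X$, so $U(x) = X$ for every $x$. Consequently $U_F = \bigcup_{x \in X} x^{-1}X = Z_F$ and $U_A = \bigcup_{x \in X}(X - x) = Z_A$. The key observation is that these generating sets are symmetric, since $(x^{-1}y)^{-1} = y^{-1}x \in Z_F$ and $-(y-x) = x - y \in Z_A$. A normal submonoid generated by a symmetric set is automatically a normal subgroup, because the inverse of a product of conjugates of generators is again such a product. Hence $N_F$ and $N_A$ are the normal subgroup and the subgroup generated by $Z_F$ and $Z_A$, which are $Z_0(X)$ and $Z_0^A(X)$ by definition; combined with the inclusions above, this yields equality.

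For the converse I would argue contrapositively. If $X$ is not indiscrete, then $U(x_0) \neq X$ for some $x_0 \in X$, since otherwise every nonempty open set would contain some point $x$ and hence $U(x) = X$, forcing $X$ to be the only nonempty open set. Fix such an $x_0$ and a point $z \in X \setminus U(x_0)$. The crucial device is, for each open set $O \subseteq X$, the homomorphism $\pi_O \colon A_a(X) \to \Z$ sending $\sum_w n_w w$ to $\sum_{w \in O} n_w$. On a generator $y - x$ of $N_A$, where $y \in U(x)$, the value $\pi_O(y - x)$ is nonnegative: if $x \in O$ then $U(x) \subseteq O$, so $y \in O$ and the value is $0$; if $x \notin O$ the value is $1$ when $y \in O$ and $0$ otherwise. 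Hence $\pi_O \geq 0$ on all of $N_A$. Taking $O = U(x_0)$, we have $x_0 \in O$ and $z \notin O$, so $\pi_O(z - x_0) = -1$; therefore $z - x_0 \notin N_A$, even though $z - x_0 \in Z_0^A(X)$, which refutes $(3)$. For $(2)$ I would compose with the canonical homomorphism $\hat I \colon \FP(X) \to \AP(X)$: since $\hat I$ sends each generator $g x^{-1} y g^{-1}$ of $N_F$ to $y - x \in U_A$, we have $\hat I(N_F) \subseteq N_A$, and so $\pi_O \circ \hat I \geq 0$ on $N_F$ while $\pi_O(\hat I(x_0^{-1} z)) = \pi_O(z - x_0) = -1$. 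Thus $x_0^{-1} z \notin N_F$ although $x_0^{-1} z \in Z_0(X)$, which refutes $(2)$. Together these implications give $(1) \Leftrightarrow (2)$ and $(1) \Leftrightarrow (3)$.

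The step I expect to be the main obstacle is constructing the right invariant for the converse. A plain exponent-counting homomorphism into an abelian group cannot distinguish the sets in question, since it would vanish on all of $Z_0(X)$ and on $N_F$ alike; the point is that $N_F$ and $N_A$ are submonoids, not subgroups, so one needs a $\Z$-valued functional whose \emph{sign} is constrained on the monoid generators. The open-set-weighted sum $\pi_O$ with $O = U(x_0)$ is exactly such a one-sided invariant, and checking its nonnegativity on generators, together with the reduction of the nonabelian case through $\hat I$, is where the real work lies.
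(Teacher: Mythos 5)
Your proof is correct, but for the hard direction it takes a genuinely different and more self-contained route than the paper. The forward direction is essentially the paper's: when $X$ is indiscrete, $U_F=Z_F$ and $U_A=Z_A$; you add the observation (correct, and worth making explicit) that a normal submonoid generated by a symmetric set is automatically a normal subgroup, which is the step the paper leaves implicit when it passes from ``same generating set'' to $N_F=Z_0(X)$. The divergence is in the converse. The paper proves the cycle $(2)\Rightarrow(3)\Rightarrow(1)$: it pushes $(2)$ to $(3)$ through the canonical homomorphism $\hat{I}$, and for $(3)\Rightarrow(1)$ it relies on Theorem~\ref{N} --- since $\{N_A\}$ is a neighborhood base at $0_A$ for the free topology, $N_A=Z_0^A(X)$ makes each coset $Z_k^A(X)$ open, and the trace of the minimal neighborhood $x+N_A=Z_1^A(X)$ on $X$ is all of $X$, forcing $X$ to be indiscrete. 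Your argument avoids Theorem~\ref{N} entirely: the functional $\pi_O\bigl(\sum_w n_w w\bigr)=\sum_{w\in O}n_w$ with $O=U(x_0)$ is a purely algebraic one-sided invariant, nonnegative on the monoid generators precisely because $x\in O$ open forces $U(x)\subseteq O$, yet equal to $-1$ on $z-x_0\in Z_0^A(X)$; composing with $\hat{I}$ (using only $\hat{I}(N_F)\subseteq N_A$) handles the nonabelian case. What the paper's route buys is brevity given the machinery already established; what yours buys is independence from the topological description of $\FP(X)$ and $\AP(X)$, so it would work before Theorem~\ref{N} is available, and it produces explicit witnesses ($x_0^{-1}z$, respectively $z-x_0$) lying in $Z_0(X)\setminus N_F$ and $Z_0^A(X)\setminus N_A$.
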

\begin{proof}
(1)$\Rightarrow$(2): Assume that $X$ is indiscrete. Then $U(x)=X$ for all $x\in X$ and so $U_F=Z_F$, where $Z_F$ is the generating set for $Z_0(X)$. Therefore, $N_F=Z_0(X)$.

(2)$\Rightarrow$(3): Assume that $N_F=Z_0(X)$. Let $\hat{I}\colon \FP(X)\to \AP(X)$ be the canonical mapping. Thus $\hat{I}(N_F)=\hat{I}(Z_0(X))$. Since $\hat{I}(N_F)=N_A$ and $\hat{I}(Z_0(X))=Z_0^A(X)$, so $N_A=Z_0^A(X)$.

(3)$\Rightarrow$(1): Assume that $N_A=Z_0^A(X)$. Thus $Z_k^A(X)$ is open in $\AP(X)$ for each $k\in \Z$. Since $Z_1^A(X)\cap X=X$ and $Z_k^A(X)\cap X=\emptyset$ for all $k\in \Z\setminus \{1\}$, we have $X$ is indiscrete.
\end{proof}

 We call a space $X$ a \textit{partition} space if $X$ has a base which is a partition of $X$. Clearly, every partition space is an Alexandroff space.

It is easy to see that if $X$ is a partition space, then the collection $\{U(x)\}_{x\in X}$ is a partition on $X$.

\begin{theorem}
If $X$ is a partition space, then the free paratopological groups $\FP(X)$ and $\AP(X)$ are partition spaces.
\end{theorem}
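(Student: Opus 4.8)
The plan is to exhibit an explicit base of $\FP(X)$ (and of $\AP(X)$) that is a partition, and the natural candidate is the family $\mathscr{H}_F=\{gN_F:g\in F_a(X)\}$ already introduced above. Since every partition space is Alexandroff, Theorem~\ref{Alexandroff4} gives that $\FP(X)$ is an Alexandroff space; hence its minimal open sets $U(g)$ form a base, and it suffices to show that these minimal sets form a partition. First I would identify them concretely. By Theorem~\ref{N} the collection $\{N_F\}$ is a neighbourhood base at $e$, so $N_F$ is the smallest neighbourhood of $e$, that is, $N_F=U(e)$, and it is open. Because left translation $h\mapsto gh$ is a homeomorphism of any paratopological group (its inverse is left translation by $g^{-1}$, which is continuous), the smallest open set containing $g$ is $gU(e)=gN_F$. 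Thus $\mathscr{H}_F$ is exactly the minimal base of the Alexandroff space $\FP(X)$, and $\FP(X)$ is a partition space precisely when the sets $gN_F$ are pairwise equal or disjoint.

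The key step is to prove that $N_F$ is in fact a subgroup of $F_a(X)$, not merely a normal submonoid; once this is known, $\mathscr{H}_F$ is the family of left cosets of $N_F$, which automatically partitions $F_a(X)$. To establish this I would invoke the hypothesis that $X$ is a partition space, which (as observed just before the statement) makes $\{U(x)\}_{x\in X}$ a partition of $X$. Consequently, for $x,y\in X$ one has $y\in U(x)$ if and only if $U(x)=U(y)$ if and only if $x\in U(y)$. This symmetry shows that the generating set $U_F=\bigcup_{x\in X}x^{-1}U(x)$ is symmetric: if $x^{-1}y\in U_F$ with $y\in U(x)$, then $x\in U(y)$, so $(x^{-1}y)^{-1}=y^{-1}x\in y^{-1}U(y)\subseteq U_F$.

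From the symmetry of $U_F$ I would deduce that $N_F$ is a subgroup. Recall that $N_F$ is the submonoid generated by the set $S=\{gx^{-1}yg^{-1}:g\in F_a(X),\ x^{-1}y\in U_F\}$ of conjugates of elements of $U_F$, as recorded in the explicit description of $N_F$ above. Since $U_F=U_F^{-1}$, the set $S$ is symmetric as well: the inverse of $gx^{-1}yg^{-1}$ is $gy^{-1}xg^{-1}$, which lies in $S$ because $y^{-1}x\in U_F$. A submonoid generated by a symmetric set is closed under inversion — the inverse of a word $s_1s_2\cdots s_n$ is $s_n^{-1}\cdots s_1^{-1}$, again a word in $S$ — and therefore is a subgroup. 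Hence $N_F$ is a normal subgroup of $F_a(X)$, its left cosets $\{gN_F\}$ partition $F_a(X)$, and since these cosets are precisely the minimal open sets, $\FP(X)$ is a partition space.

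The argument for $\AP(X)$ is entirely parallel, and in fact simpler, since $A_a(X)$ is abelian and no conjugation is needed: the generating set $U_A=\bigcup_{x\in X}(U(x)-x)$ is symmetric by the same partition argument, so the submonoid $N_A=\langle U_A\rangle$ is a subgroup, its cosets $\mathscr{H}_A=\{g+N_A\}$ are the minimal open sets, and they partition $A_a(X)$. The only genuine obstacle is the passage from ``$U_F$ symmetric'' to ``$N_F$ a subgroup''; the normal-submonoid description of $N_F$ recorded above makes this transparent, so I expect no serious difficulty beyond handling that step carefully.
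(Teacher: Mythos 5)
Your proposal is correct and follows essentially the same route as the paper: both show that when $X$ is a partition space $N_F$ (resp.\ $N_A$) is a normal subgroup rather than just a normal submonoid, so that the open base $\mathscr{H}_F$ (resp.\ $\mathscr{H}_A$) of cosets is a partition. The paper merely asserts the subgroup property, whereas you supply the missing verification via the symmetry of $U_F$ coming from the partition $\{U(x)\}_{x\in X}$; that is a welcome elaboration, not a different argument.
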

\begin{proof}
Let~$X$ be a partition space. Then $N_F$ and $N_A$ are normal subgroups of $\FP(X)$ and $\AP(X)$, respectively. Thus the collections~$\mathscr{H}_F$ and~$\mathscr{H}_A$ as defined above are partitions of~$\FP(X)$ and~$\AP(X)$, respectively. Therefore the result follow.
\end{proof}

\section{Applications}
Let $\mathcal{T}_A$ be the topology of the subspace $X^{-1}$ of $\FP(X)$, where $X$ be any topological space. Then by Theorem 4.2 of~\cite{Elfard1}, the topology $\mathcal{T}_A$ has as an open base the collection $\{C^{-1}: C$ closed in $X\}$. In this topology, the intersection of every collection of open subsets is open, and the space $X_A^{-1} = (X^{-1}, \mathcal{T}_A)$ is therefore an Alexandroff space.

\begin{theorem}
\label{topgp}
Let $X$ be a topological space. Then the group $\FP(X)$ on $X$ is a topological group if and only if $X$ is a partition space.
\end{theorem}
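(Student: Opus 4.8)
The plan is to prove both implications through the neighborhood base $\mathcal{N}_F=\{N_F\}$ at the identity furnished by Theorem~\ref{N}, together with the description of the subspace $X^{-1}$ of $\FP(X)$ recalled just before this statement. The forward direction reduces to recognizing $N_F$ as a genuine subgroup, and the converse exploits the asymmetry between the topologies carried by $X$ and by $X^{-1}$ inside $\FP(X)$.

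For the implication ``$X$ a partition space $\Rightarrow \FP(X)$ a topological group'', I would first recall that, as established in the proof of the preceding theorem, a partition space makes $\{U(x)\}_{x\in X}$ a partition, so the generating set $U_F=\bigcup_{x\in X}x^{-1}U(x)$ is symmetric: if $y\in U(x)$ then $U(y)=U(x)\ni x$, whence $(x^{-1}y)^{-1}=y^{-1}x\in U_F$. Consequently $N_F$ is not merely a normal submonoid but a normal subgroup of $F_a(X)$. Since $\mathcal{N}_F=\{N_F\}$ is a neighborhood base at $e$ and each left translation is a homeomorphism of the paratopological group $\FP(X)$, the basic neighborhood of an arbitrary $g$ is $gN_F$; applying inversion gives $(gN_F)^{-1}=N_Fg^{-1}=g^{-1}N_F$, the last equality using normality. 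Thus inversion carries the basic neighborhood $gN_F$ of $g$ onto the basic neighborhood $g^{-1}N_F$ of $g^{-1}$, so inversion is continuous and $\FP(X)$ is a topological group.

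For the converse, suppose $\FP(X)$ is a topological group, so inversion $\iota$ is a self-homeomorphism of $\FP(X)$. Then $\iota$ restricts to a homeomorphism of the subspace $X$ (carrying the original topology $\tau$) onto the subspace $X^{-1}$ (carrying the topology $\mathcal{T}_A$ with base $\{C^{-1}:C \text{ closed in } X\}$). Pulling this base back along $x\mapsto x^{-1}$, and using that $\iota^{-1}(C^{-1})=C$, shows that $\tau$ must have $\{C:C \text{ closed in } X\}$ as a base; in particular every closed subset of $X$ is open. Taking complements then gives that every open set is closed, so the open and closed sets coincide; being closed under arbitrary intersections, as closed sets always are, they make $X$ an Alexandroff space. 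It remains to verify that the minimal neighborhoods partition $X$: if $y\in U(x)$ then $U(y)$ is clopen, so if $x\notin U(y)$ we would have $U(x)\subseteq X\setminus U(y)$, contradicting $y\in U(x)\cap U(y)$; hence $x\in U(y)$, and minimality forces $U(x)=U(y)$. Thus $\{U(x)\}_{x\in X}$ is a partition and $X$ is a partition space.

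I expect the main obstacle to be the converse, and specifically the step identifying, through the self-homeomorphism $\iota$, the original topology on $X$ with the topology generated by the closed sets. It is precisely this ``closed sets are open'' phenomenon, forced by continuity of inversion on the asymmetric pair $(X,X^{-1})$, that collapses $X$ onto a partition space; the remaining passage from ``open $=$ closed'' to the partition of minimal neighborhoods is then routine.
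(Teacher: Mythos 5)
Your proof is correct and follows essentially the same route as the paper: the forward direction uses the base $\{C^{-1}: C \text{ closed in } X\}$ for the subspace $X^{-1}$ together with the inversion homeomorphism to show every open set is closed, and the converse uses that a partition space makes $U_F$ symmetric, hence $N_F$ a normal subgroup, so that the base $\{N_F\}$ from Theorem~\ref{N} is inversion-invariant. You in fact supply details the paper leaves implicit, namely the passage from ``every open set is clopen'' to ``$X$ is a partition space'' and the explicit continuity of inversion via $(gN_F)^{-1}=g^{-1}N_F$.
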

\begin{proof}
$\Longrightarrow$: Assume that $\FP(X)$ is a topological group. Let $U$ be an open set in $X$. By the argument above, the topology on the subspace $X_A^{-1}$ of $\FP(X)$ has the collection $\{C^{-1}: C$ is closed in $X\}$ as a base. Thus $(U^c)^{-1}$ is open in $X_A^{-1}$. Since the inversion mapping of $X^{-1}$ to $X$ is a homeomorphism, $U^c$ is open in $X$. So $U$ is closed in $X$ and therefore, $X$ is a partition space.

$\Longleftarrow$: If $X$ is a partition space, then $N_F$ is a subgroup of $\FP(X)$. Therefore, $\FP(X)$ is a topological group.

The same proof works for $\AP(X)$.
\end{proof}

\begin{proposition}
 \label{w}
 Let $X$ be an Alexandroff space and let $\FP(X)$ be the free paratopological group on $X$. Then the space $X$ is $T_0$ if and only if for each $w\in N_F$ and $w\neq e$ we have  $\hat{I}(w)\neq 0_A$.
\end{proposition}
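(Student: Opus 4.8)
The plan is to translate the $T_0$ hypothesis into the specialization preorder of the Alexandroff space $X$ and then to control the canonical map $\hat{I}$ on $N_F$ by a monotone ``potential'' argument. Write $x\preceq y$ for $y\in U(x)$; since $U(x)$ is the smallest open set containing $x$, this is a preorder, and $X$ is $T_0$ precisely when $\preceq$ is antisymmetric, i.e.\ when $y\in U(x)$ and $x\in U(y)$ force $x=y$. Each generator of $U_F$ has the shape $x^{-1}y$ with $y\in U(x)$, that is with $x\preceq y$, and $\hat{I}(x^{-1}y)=y-x$; since the conjugating factors $g_i,g_i^{-1}$ cancel under the abelianizing homomorphism $\hat{I}$, every $w=g_1x_1^{-1}y_1g_1^{-1}\cdots g_nx_n^{-1}y_ng_n^{-1}\in N_F$ satisfies
\[
\hat{I}(w)=\sum_{i=1}^{n}(y_i-x_i)\in A_a(X),\qquad x_i\preceq y_i\ \text{for every }i .
\]
I would prove both implications in contrapositive form.

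For the implication ``$\hat{I}$ is nonzero off $e$ $\Rightarrow$ $X$ is $T_0$'', suppose $X$ is not $T_0$ and produce a witness. Choose distinct $x,y$ with $y\in U(x)$ and $x\in U(y)$, so that both $x^{-1}y$ and $y^{-1}x$ lie in $U_F$. Their product in $F_a(X)$ collapses to $e$, so to avoid this cancellation I would conjugate one factor and set
\[
w=\bigl(x\,(x^{-1}y)\,x^{-1}\bigr)\,(y^{-1}x)=yx^{-1}y^{-1}x .
\]
This $w$ is a product of a conjugate of a generator and a generator, hence lies in $N_F$; it is a nonempty reduced word, so $w\neq e$; and $\hat{I}(w)=(y-x)+(x-y)=0_A$. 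Thus $w$ is the required nontrivial element of $N_F$ killed by $\hat{I}$.

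For the converse, assume $X$ is $T_0$ and that $w\in N_F$ satisfies $\hat{I}(w)=0_A$; the goal is $w=e$. Let $S\subseteq X$ be the finite set of all $x_i,y_i$ appearing in a normal-submonoid expression for $w$. Restricted to $S$ the preorder $\preceq$ is antisymmetric (by $T_0$), hence a finite partial order, so it admits a strictly order-preserving map $\phi\colon S\to\Z$ coming from a linear extension; extend $\phi$ to a homomorphism $A_a(X)\to\Z$ by sending basis elements outside $S$ to $0$. Applying it to $\hat{I}(w)=\sum_i(y_i-x_i)=0$ gives $\sum_i\bigl(\phi(y_i)-\phi(x_i)\bigr)=0$, where each summand is $\geq 0$ because $x_i\preceq y_i$; hence every summand vanishes, so $\phi(x_i)=\phi(y_i)$ and, by strict monotonicity, $x_i=y_i$ for all $i$. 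Then each factor $g_ix_i^{-1}y_ig_i^{-1}$ equals $e$, so $w=e$. I expect this converse to be the main obstacle: the essential point is to upgrade the purely additive vanishing $\sum(y_i-x_i)=0$ to triviality of the \emph{word} $w$, and the monotone potential built from a linear extension of the specialization order is exactly the device that forces each generator $x_i\preceq y_i$ to be degenerate.
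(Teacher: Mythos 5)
Your proof is correct, and one half of it diverges from the paper's in an interesting way. The witness you construct when $X$ fails to be $T_0$ --- the commutator $yx^{-1}y^{-1}x$, exhibited as a product of a conjugated generator and a generator of $U_F$ --- is essentially the same element the paper uses ($x^{-1}yxy^{-1}$), so that direction matches. For the other direction the routes genuinely differ. The paper argues contrapositively: from a nontrivial $w\in N_F$ with $\hat{I}(w)=\sum(y_i-x_i)=0_A$ it extracts a permutation $\sigma$ of the indices with $x_i=y_{\sigma(i)}$, follows one cycle of $\sigma$, and obtains a cyclic chain of inclusions $U(y_{i_1})\subseteq U(x_{i_1})=U(y_{i_2})\subseteq\cdots\subseteq U(y_{i_1})$ that collapses to equalities, so the points involved cannot be separated and $X$ is not $T_0$. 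You instead argue directly: every generator $x_i^{-1}y_i$ points upward in the specialization preorder, which under $T_0$ is a partial order on the finite set of letters of $w$; a linear extension gives an injective monotone potential $\phi$ into $\Z$, each increment $\phi(y_i)-\phi(x_i)$ is nonnegative, their sum is zero, hence each is zero and $x_i=y_i$, so $w=e$. Your approach buys a cleaner combinatorial core: it sidesteps the need to produce a genuine permutation from the multiset identity $\sum y_i=\sum x_i$ (where repeated letters require some care in the paper's argument) and replaces the cycle-chasing with a one-line monotonicity computation. The paper's version, in exchange, explicitly names the points witnessing the failure of $T_0$, which is marginally more informative in contrapositive form. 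Both are valid proofs of the same equivalence.
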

\begin{proof}
$\Longrightarrow$: Suppose that there exists $w\in N_F$ and $w\neq e$ such that $\hat{I}(w)=0_A$, where
 \begin{center}$w=g_1x_1^{-1}y_1g_1^{-1}g_2x_2^{-1}y_2g_2^{-1}\cdots g_nx_n^{-1}y_ng_n^{-1}$ for some $n\in\N, y_i\neq x_i$ and $y_i\in U(x_i)$ for all $i=1, 2, \ldots, n$.\end{center} Then we have $\hat{I}(w)=y_1-x_1+y_2-x_2+\cdots +y_n-x_n=0_A$.
If $n=1$, then $x_1=y_1$, which gives a contradiction. Assume that $n>1$. Since $\hat{I}(w)=0_A$, for each $i\in A=\{1, 2, \ldots, n\}$, there exists $j_i\in A$, where $j_i\neq i$ such that $x_i=y_{j_i}$. Define $\sigma\colon A\to A$ by setting $\sigma(i)=j_i$ for all $i\in A$. Clearly $\sigma$ is a permutation on $A$. Since any permutation can be written as  product of cycles, there are $m\in \N$, where $2\leq m \leq n$ and distinct $i_1, i_2, \ldots, i_m\in A$ such that $\sigma(i_1)=i_2$, $\sigma(i_2)=i_3, \dots, \sigma(i_{m-1})=i_m, \sigma(i_m)=i_1$ and such that $x_{i_k}=y_{\sigma(i_k)}$ for $k=1, 2, \ldots, m$. Thus $x_{i_1}=y_{i_2}, x_{i_2}=y_{i_3}, \ldots, x_{i_{m-1}}=y_{i_m}, x_{i_m}=y_{i_1}$ and hence
$$U(y_{i_1})\subseteq U(x_{i_1})=U(y_{i_2})\subseteq U(x_{i_2})=U(x_{i_3})\subseteq \cdots $$
$$\subseteq U(x_{i_{m-1}})=U(y_{i_m})\subseteq U(x_{i_m})=U(y_{i_1}),$$
which implies that $$U(y_{i_1})=U(x_{i_1})=U(y_{i_2})=U(x_{i_2})=\cdots =U(x_{i_{m-1}})=U(y_{i_m}).$$
Thus we can not separate the points $y_{i_1}, x_{i_1}, y_{i_2}, x_{i_2}, \ldots, x_{i_{m-1}}, y_{i_m}$. Therefore, $X$ is not a $T_0$~space.

$\Longleftarrow$: Assume that $X$ is not $T_0$. Then there are $x, y\in X$ such that $x\neq y$ and $U(x)=U(y)$, which implies that $x\in U(y)$ and $y\in U(x)$. Hence $$w=x^{-1}yxy^{-1}=(x^{-1}y)(x(y^{-1}x)x^{-1})\in N_F$$ and $w\neq e$, but $\hat{I}(w)=0_A$. Therefore, the space~$X$ is~$T_0$.
\end{proof}

We note that a corollary of this result is that if $X$ is an Alexandroff $T_0$~space, then $\hat{I}$ has the property that $\ker\hat{I}\cap N_F=\{e\}$.

The following result is easy to prove.

\begin{proposition}
 \label{G}
 Let $G$ be a paratopological group. Then $G$ is a $T_0$~space if and only if for all $a\in G$ such that $a\neq e$, there exists a neighborhood $U$ of $e$ such that either $a\notin $U or $a^{-1}\notin U$.
\end{proposition}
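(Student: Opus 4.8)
The plan is to reduce the separation of an arbitrary pair of distinct points to the separation of a single element from the identity, using the homogeneity of a paratopological group. The only structural fact I need is that for each fixed $g\in G$ the left translation $x\mapsto gx$ is a homeomorphism of $G$ onto itself: it is continuous because multiplication is (jointly, hence separately) continuous, and its inverse is the left translation by $g^{-1}$, which is continuous for the same reason. In particular $U$ is a neighborhood of $e$ if and only if $gU$ is a neighborhood of $g$, and $h\in gU$ if and only if $g^{-1}h\in U$.

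For the forward implication, I would start from the pair consisting of $a\neq e$ and $e$. By $T_0$ there is an open set containing exactly one of them. If it contains $e$ but not $a$, it is already the desired neighborhood $U$ of $e$ with $a\notin U$. If instead it is an open set $W$ with $a\in W$ and $e\notin W$, I translate: $a^{-1}W$ is an open neighborhood of $e$ (since $e=a^{-1}a\in a^{-1}W$), and $a^{-1}\notin a^{-1}W$ precisely because $e\notin W$. Thus $U=a^{-1}W$ witnesses $a^{-1}\notin U$. Either way the stated condition holds.

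For the converse, given distinct $g,h\in G$ I set $a=g^{-1}h\neq e$ and apply the hypothesis to $a$. If $a\notin U$ for some neighborhood $U$ of $e$, then $gU$ is an open neighborhood of $g$ that misses $h$, because $h\in gU$ would force $g^{-1}h=a\in U$. If instead $a^{-1}=h^{-1}g\notin U$, then symmetrically $hU$ is an open neighborhood of $h$ that misses $g$. In both cases one of the two points has an open neighborhood excluding the other, so $G$ is $T_0$.

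There is no real obstacle here; the one point worth flagging is the asymmetry built into the statement. Because inversion need not be continuous in a paratopological group, separating $g$ from $h$ is not the same problem as separating $h$ from $g$, and this is exactly why the criterion must allow either $a\notin U$ or $a^{-1}\notin U$. Keeping track of which translation ($g$ versus $h$) to apply in each of the two cases is the only bookkeeping the argument requires.
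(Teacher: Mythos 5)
Your proof is correct: the paper itself omits the argument (it only remarks that the result is ``easy to prove''), and your translation argument --- reducing the separation of distinct $g,h$ to separating $a=g^{-1}h$ from $e$, using that left translations are homeomorphisms even without continuity of inversion --- is exactly the standard argument one would supply. Your closing remark about why the criterion must allow either $a\notin U$ or $a^{-1}\notin U$ correctly identifies the point where this differs from the topological-group case.
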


\begin{proposition}
\label{AT0}
Let $X$ be an Alexandroff space. Then $\FP(X)$ is a $T_0$~space if and only if $X$ is a $T_0$~space.
\end{proposition}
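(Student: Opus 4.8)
The plan is to prove the two implications separately, disposing of the forward direction by hereditarity of $T_0$ and concentrating all the work on the converse.

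For the direction ``$\FP(X)$ is $T_0\Rightarrow X$ is $T_0$'', I would simply use that $X$ sits inside $\FP(X)$ as a subspace whose subspace topology is the original topology of $X$ (Definition~\ref{freeM1} and Remark~\ref{strongestpt}). Since the $T_0$ axiom is hereditary, any two distinct points of $X$ are separated by an open set of $\FP(X)$, and intersecting that set with $X$ separates them in $X$. This direction uses neither the Alexandroff hypothesis nor the structure of $N_F$.

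The substance is the converse, so assume $X$ is $T_0$ (and Alexandroff). First I would reduce the claim to an algebraic statement about $N_F$. By Theorem~\ref{N} the single set $N_F$ is a neighborhood base at $e$, so the neighborhoods of $e$ are exactly the supersets of $N_F$. Feeding this into Proposition~\ref{G}, the condition ``for each $a\neq e$ there is a neighborhood $U$ of $e$ with $a\notin U$ or $a^{-1}\notin U$'' becomes ``$a\notin N_F$ or $a^{-1}\notin N_F$'' (taking $U=N_F$, the smallest neighborhood). Hence $\FP(X)$ is $T_0$ if and only if no $a\neq e$ has both $a\in N_F$ and $a^{-1}\in N_F$; equivalently, $N_F\cap N_F^{-1}=\{e\}$. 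To establish this I would push the problem through the canonical homomorphism $\hat{I}$, noting $\hat{I}(N_F)\subseteq N_A$: if $a\in N_F\cap N_F^{-1}$ then $v:=\hat{I}(a)\in N_A$ and $-v=\hat{I}(a^{-1})\in N_A$, so it suffices to prove $N_A\cap(-N_A)=\{0_A\}$. Granting this, $\hat{I}(a)=0_A$, and since $\ker\hat{I}\cap N_F=\{e\}$ for $T_0$ Alexandroff $X$ (the corollary to Proposition~\ref{w}), we conclude $a=e$.

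The main obstacle, and the only nonroutine point, is the positivity statement $N_A\cap(-N_A)=\{0_A\}$. The generators of $N_A$ are the differences $y-x$ with $y\in U(x)$, i.e.\ the pairs comparable in the specialization preorder, which is a partial order precisely because $X$ is $T_0$. Given an identity $\sum_k(b_k-a_k)=0_A$ with every $b_k\in U(a_k)$, the heads and tails form the same multiset, so the directed edges $a_k\to b_k$ decompose into cycles; along each cycle the chain $a_{k_1}\le b_{k_1}=a_{k_2}\le\cdots\le a_{k_1}$ collapses by antisymmetry, forcing $a_k=b_k$ throughout and hence $v=0_A$. This is exactly the cycle argument already used in Proposition~\ref{w}, so the only care required is to phrase it at the level of $N_A$ rather than re-deriving it; everything else is bookkeeping built on Theorem~\ref{N} and Proposition~\ref{G}.
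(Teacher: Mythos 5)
Your proof is correct, and its skeleton matches the paper's: both directions reduce, via Proposition~\ref{G} and the fact that $N_F$ is the minimal neighborhood of $e$ (Theorem~\ref{N}), to showing that no $w\neq e$ lies in $N_F\cap N_F^{-1}$, and both push the problem through the canonical homomorphism $\hat{I}$ into $\AP(X)$. The one substantive difference is how the abelian side is finished. The paper argues by contradiction: from $w\neq e$ in $N_F$ it gets $\hat{I}(w)\neq 0_A$ by Proposition~\ref{w}, observes $\pm\hat{I}(w)\in N_A$, concludes via Proposition~\ref{G} that $\AP(X)$ is not $T_0$, and then invokes Pyrch's Proposition~3.4 (that $X$ being $T_0$ implies $\AP(X)$ is $T_0$) to reach the contradiction. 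You instead prove the needed abelian positivity statement $N_A\cap(-N_A)=\{0_A\}$ directly, by the same multiset/cycle/antisymmetry argument that underlies Proposition~\ref{w}, and then close with the corollary $\ker\hat{I}\cap N_F=\{e\}$. Your cycle argument is sound: a relation $\sum_k(b_k-a_k)=0_A$ with $b_k\in U(a_k)$ forces a bijection between heads and tails, and along each cycle the inclusions $U(b_{k_i})\subseteq U(a_{k_i})=U(b_{k_{i+1}})$ close up, so all the minimal neighborhoods coincide and $T_0$ gives $a_k=b_k$ term by term. What your route buys is self-containedness --- no appeal to the external result on $\AP(X)$ --- at the cost of redoing (in the easier abelian setting) the combinatorial argument the paper prefers to reuse only once, inside Proposition~\ref{w}.
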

\begin{proof}
$\Longrightarrow$: Since $X$ is a subspace of $\FP(X)$, the result follows.

$\Longleftarrow$: Assume that $X$ is $T_0$. We claim that $\FP(X)$ is $T_0$. In fact, if $\FP(X)$ is not $T_0$, then by Proposition~\ref{G}, there exists $w\in \FP(X)$, $w\neq e$ such that $w, w^{-1}\in N_F$. Hence by Proposition~\ref{w}, $\hat{I}(w)\neq 0_A$ and it is easy to see that $\hat{I}(w), -\hat{I}(w)\in N_A$, which implies that $\hat{I}(w), -\hat{I}(w)$ are in every neighborhood of $0_A$. Once again by Proposition~\ref{G}, $\AP(X)$ is not $T_0$ and so by Proposition 3.4 of~\cite{Pyrch1} (which says that if a space $X$ is $T_0$, then $\AP(X)$ is $T_0$), $X$ is not a $T_0$~space, which gives a contradiction. Therefore, $\FP(X)$ is $T_0$.
\end{proof}

 Fix $n \in \N$ and let $R_n = \{1, 2, \ldots, n\}\subseteq \N$. For $i = 0, 1, \ldots, n$, define $R_{n,i} = \{1, 2, \ldots, i\}$ and  $\tau_n = \{R_{n,i} : i = 0, \ldots, n\}$. Then it is easy to see that $\tau_n$ is a topology on $R_n$. Let $m, k\in R_n$, where $m\neq k$ and assume that $m<k$. Then $m\in R_{n, m}$ and $k\notin R_{n, m}$. Therefore,  $(R_n, \tau_n)$ is a $T_0$~space.

\begin{proposition}
\label{Xn}
Let $X$ be a $T_0$~space and let $x_1, x_2,\ldots, x_n$ be distinct elements of~$X$. Then there exists a continuous mapping $\mu\colon X \to R_n$ such that $\mu|_{\{x_1, x_2, \ldots, x_n\}}$ is one-to-one.
\end{proposition}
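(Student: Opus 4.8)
The plan is to convert continuity into $R_n$ into a statement about chains of open sets, and then to extract the required injective labelling from a linear extension of the specialization order on the finitely many given points.

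First I would record the dictionary between maps into $R_n$ and chains of open sets. Since the open sets of $R_n$ form the nested family $\emptyset=R_{n,0}\subseteq R_{n,1}\subseteq\cdots\subseteq R_{n,n}=R_n$, a map $\mu\colon X\to R_n$ is continuous precisely when each $V_i:=\mu^{-1}(R_{n,i})$ is open in $X$; these sets form an increasing chain $\emptyset=V_0\subseteq V_1\subseteq\cdots\subseteq V_n=X$, and conversely any such chain of open sets defines a continuous map by $\mu(x)=\min\{i:x\in V_i\}$, for which $\mu(x)=i$ holds exactly when $x\in V_i\setminus V_{i-1}$. Thus it suffices to produce an increasing chain of open sets in which each of the points $x_1,\dots,x_n$ lands in a distinct layer $V_i\setminus V_{i-1}$.

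To organise the points I would use the specialization order. Define $x\sqsubseteq y$ to mean that every open set containing $y$ also contains $x$; this is a preorder, and because $X$ is $T_0$ it is antisymmetric, hence a partial order, whose restriction to $\{x_1,\dots,x_n\}$ is again a partial order. A finite partial order admits a linear extension, so I may enumerate the points as $p_1,\dots,p_n$ so that $p_b\sqsubseteq p_a$ implies $b\le a$. Equivalently, whenever $a<b$ we have $p_b\not\sqsubseteq p_a$, so there is an open set $O_{a,b}$ with $p_a\in O_{a,b}$ and $p_b\notin O_{a,b}$. The goal is then a continuous $\mu$ with $\mu(p_i)=i$.

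Next I would build the chain. Fix $i\in\{1,\dots,n-1\}$. For each $a\le i$ the set $W_a^{(i)}:=\bigcap_{b>i}O_{a,b}$ is a finite intersection of open sets, hence open; it contains $p_a$ and misses each of $p_{i+1},\dots,p_n$. Setting $V_i:=\bigcup_{a\le i}W_a^{(i)}$ gives an open set with $\{p_1,\dots,p_i\}\subseteq V_i$ and $V_i\cap\{p_{i+1},\dots,p_n\}=\emptyset$. These $V_i$ need not be nested, so I replace them by their cumulative unions $V_i':=V_1\cup\cdots\cup V_i$ for $1\le i\le n-1$, and set $V_0':=\emptyset$, $V_n':=X$; since each $V_a$ with $a\le i$ already avoids $p_{i+1},\dots,p_n$, the sets $V_i'$ remain disjoint from the tail while still containing the initial segment, and they are increasing by construction. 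The associated map $\mu(x)=\min\{i:x\in V_i'\}$ is then continuous, and $p_i\in V_i'\setminus V_{i-1}'$ forces $\mu(p_i)=i$; in particular $\mu$ maps $\{x_1,\dots,x_n\}$ bijectively onto $R_n$, so it is one-to-one there.

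The step I expect to be the main obstacle is the simultaneous separation hidden in the construction of $V_i$: I must exhibit a single open set that contains the whole initial segment $\{p_1,\dots,p_i\}$ and avoids the entire tail $\{p_{i+1},\dots,p_n\}$, rather than merely separating points pairwise. This is exactly where both hypotheses enter, since finiteness of the point set lets me take finite intersections while preserving openness, and the linear extension of the specialization order guarantees that each individual separation $p_a\in O_{a,b}\not\ni p_b$ is available in the correct direction.
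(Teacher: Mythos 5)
Your proof is correct and complete: the identification of continuous maps into $R_n$ with increasing chains of open sets, the use of a linear extension of the specialization order (where $T_0$ supplies antisymmetry), and the finite intersections $W_a^{(i)}$ followed by cumulative unions all check out, and $\mu(p_i)=i$ follows as you claim. The paper itself states Proposition~\ref{Xn} without any proof, so there is no argument of the author's to compare against; your write-up supplies exactly the kind of elementary argument the omission presupposes, and I see no gap in it.
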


\begin{theorem}
\label{T0}
Let $X$ be a topological space. Then the free paratopological group $\FP(X)$ on $X$ is $T_0$ if and only if the space $X$ is $T_0$.
\end{theorem}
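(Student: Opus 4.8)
The plan is to reduce the statement for an arbitrary $T_0$ space $X$ to the Alexandroff case already settled in Proposition~\ref{AT0}, exploiting the finite $T_0$ spaces $(R_n,\tau_n)$ together with the separating maps furnished by Proposition~\ref{Xn}. The forward implication is immediate: since $X$ is a subspace of $\FP(X)$, if $\FP(X)$ is $T_0$ then so is $X$. For the converse I would assume $X$ is $T_0$ and verify the separation criterion of Proposition~\ref{G}; that is, given $w\in\FP(X)$ with $w\neq e$, I must produce a neighborhood $U$ of $e$ with $w\notin U$ or $w^{-1}\notin U$.

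First I would fix such a $w$ and write it as a reduced word $w=x_1^{\epsilon_1}\cdots x_n^{\epsilon_n}$, letting $a_1,\dots,a_m$ be the distinct points of $X$ occurring among the $x_i$. Applying Proposition~\ref{Xn} to these points yields a continuous map $\mu\colon X\to R_m$ whose restriction to $\{a_1,\dots,a_m\}$ is one-to-one. Composing $\mu$ with the inclusion $R_m\hookrightarrow\FP(R_m)$ gives a continuous map of $X$ into the paratopological group $\FP(R_m)$, which by Definition~\ref{freeM1} extends to a continuous homomorphism $\hat{\mu}\colon\FP(X)\to\FP(R_m)$.

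The key step is to check that $\hat{\mu}(w)\neq e$. Since $\hat{\mu}$ is a homomorphism extending $\mu$, we have $\hat{\mu}(w)=\mu(x_1)^{\epsilon_1}\cdots\mu(x_n)^{\epsilon_n}$. Because $\mu$ is injective on the letters occurring in $w$, two letters of $w$ coincide exactly when their images under $\mu$ coincide; hence the pattern of possible cancellations is unchanged, and the displayed word is reduced in $F_a(R_m)$ precisely because $w$ was reduced in $F_a(X)$. As $w\neq e$, this word is nonempty and reduced, so $\hat{\mu}(w)\neq e$. I expect this preservation-of-reducedness argument to be the main point requiring care, since it is exactly where the one-to-one property of $\mu$ on the relevant points is used.

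Finally, $(R_m,\tau_m)$ is a finite $T_0$ space, hence Alexandroff and $T_0$, so Proposition~\ref{AT0} shows that $\FP(R_m)$ is $T_0$. Applying Proposition~\ref{G} in $\FP(R_m)$ to the nontrivial element $\hat{\mu}(w)$ produces a neighborhood $V$ of the identity with $\hat{\mu}(w)\notin V$ or $(\hat{\mu}(w))^{-1}\notin V$. Since $\hat{\mu}$ is continuous, $U=\hat{\mu}^{-1}(V)$ is a neighborhood of $e$ in $\FP(X)$, and because $\hat{\mu}(w^{-1})=(\hat{\mu}(w))^{-1}$ we obtain $w\notin U$ or $w^{-1}\notin U$, respectively. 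By Proposition~\ref{G}, this shows $\FP(X)$ is $T_0$, completing the converse.
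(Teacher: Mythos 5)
Your argument is correct and follows essentially the same route as the paper: reduce to the finite $T_0$ space $R_m$ via Proposition~\ref{Xn}, extend to a continuous homomorphism $\hat{\mu}$, note $\hat{\mu}(w)\neq e$ by injectivity of $\mu$ on the letters of $w$, invoke Proposition~\ref{AT0} for $\FP(R_m)$, and pull back a separating neighborhood. Your routing of the final step through Proposition~\ref{G} and your explicit check that reducedness is preserved are just slightly more detailed versions of what the paper leaves implicit.
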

\begin{proof}

$\Longrightarrow$: It is clear.

$\Longleftarrow$: Let $w=x_1^{\epsilon_1}x_2^{\epsilon_2}\cdots x_m^{\epsilon_m}\in \FP(X)$ for some $m\in \N$ such that $w\neq e$. Choose indices $i_1, i_2, \ldots, i_n$ for some $n\leq m$ such that $x_{i_1}, x_{i_2}, \ldots, x_{i_n}$ are the distinct letters among $x_1, x_2, \ldots, x_m$. Then by Proposition~\ref{Xn}, there exists a continuous mapping $\mu:X\to R_n$ such that $\mu|_{\{x_{i_1}, x_{i_2}, \ldots, x_{i_n}\}}$ is one-to-one, where $R_n$ is the space defined above. Then we extend $\mu$ to a continuous homomorphism $\hat{\mu}:\FP(X)\to \FP(R_n)$. Since $\mu|_{\{x_{i_1}, \ldots, x_{i_n}\}}$ is one-to-one, $\hat{\mu}(w)=[\hat{\mu}(x_1)]^{\epsilon_1}[\hat{\mu}(x_2)]^{\epsilon_2}\cdots [\hat{\mu}(x_n)]^{\epsilon_n}\neq e^*$, where $e^*$ is the identity of $\FP(R_n)$. By Proposition~\ref{AT0}, we have  $\FP(R_n)$ is a $T_0$~space. So there is an open set $U$ in $\FP(R_n)$, which contains one of $e^*$ or $\hat{\mu}(w)$ and does not contain the other. Say $e^*\in U$ and $\hat{\mu}(w)\notin U$. Since $\hat{\mu}$ is continuous, $\hat{\mu}^{-1}(U)$ is an open set in $\FP(X)$ such that $e\in \hat{\mu}^{-1}(U)$ and $w\notin \hat{\mu}^{-1}(U)$. Similarly for the other case. Therefore, the free paratopological group $\FP(X)$ is~$T_0$.
\end{proof}

 A topological space~$X$ is said to be the \textit{inductive limit of a cover $\mathscr{C}$} of $X$ if a subset $V$ of~$X$ is open whenever $V\cap U$ is open in $U$ for each~$U\in \mathscr{C}$.

A parallel result of the next theorem was proved in Proposition 7.4.8 of~\cite{Arhan} in the case of free topological groups.

\begin{theorem}
\label{induc}
Let $X$ be a $T_1$ $P$-space. Then the free paratopological group $\FP(X)$ ($\AP(X)$) is the inductive limit of the collection $\{\FP_n(X)\colon n\in \N\}$ ($\{\AP_n(X)\colon n\in \N\}$).
\end{theorem}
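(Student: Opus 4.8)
The plan is to sidestep any delicate analysis of the topology on the individual pieces $\FP_n(X)$ and instead to exploit the single structural fact that $\FP(X)$ itself is a $P$-space. Since $X$ is a $P$-space, that is, a $P_{\aleph_1}$-space, Proposition~\ref{pspace} (applied with $\alpha=\aleph_1$) immediately yields that $\FP(X)$ is a $P_{\aleph_1}$-space as well; equivalently, every countable intersection of open subsets of $\FP(X)$ is open. The same holds for $\AP(X)$ by the abelian analogue of Proposition~\ref{pspace} recorded just after it. Beyond this I would use only the two elementary properties of the cover, namely $\FP_n(X)\subseteq\FP_{n+1}(X)$ for every $n$ and $\bigcup_{n\in\N}\FP_n(X)=\FP(X)$, both immediate from the description of $\FP_n(X)$ as the set of elements of length at most $n$.

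First I would unwind the definition of inductive limit. One inclusion is automatic: if $W$ is open in $\FP(X)$, then $W\cap\FP_n(X)$ is open in the subspace $\FP_n(X)$ for each $n$. So it suffices to prove the converse, that any $W\subseteq\FP(X)$ for which $W\cap\FP_n(X)$ is open in $\FP_n(X)$ for every $n$ is itself open in $\FP(X)$. Fix such a $W$ and a point $w\in W$, and choose $m\in\N$ with $w\in\FP_m(X)$, which exists since $\bigcup_{n}\FP_n(X)=\FP(X)$. For each $n\ge m$ the set $W\cap\FP_n(X)$ is open in the subspace $\FP_n(X)$, so by the definition of the subspace topology I may pick an open set $U_n$ in $\FP(X)$ with $U_n\cap\FP_n(X)=W\cap\FP_n(X)$.

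Now set $G=\bigcap_{n\ge m}U_n$. As a countable intersection of open subsets of the $P$-space $\FP(X)$, the set $G$ is open. Because $w\in W\cap\FP_n(X)=U_n\cap\FP_n(X)$ for every $n\ge m$, we have $w\in G$. Finally, for an arbitrary $p\in G$ choose $\ell$ with $p\in\FP_\ell(X)$ and put $k=\max(\ell,m)\ge m$; then $p\in\FP_k(X)$ and $p\in U_k$, whence $p\in U_k\cap\FP_k(X)=W\cap\FP_k(X)\subseteq W$. Thus $w\in G\subseteq W$, so $w$ is an interior point of $W$; as $w\in W$ was arbitrary, $W$ is open. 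The argument for $\AP(X)$ with the cover $\{\AP_n(X)\}$ is identical.

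The only step that genuinely requires care, and the one I would flag as the crux, is the transfer of the $P$-space property from $X$ to $\FP(X)$ (and to $\AP(X)$) — which is exactly Proposition~\ref{pspace}; once that lever is in hand, the ``$G_\delta$ neighbourhood'' construction above closes the argument with no further input. I would note that this route uses neither the closedness of $\FP_n(X)$ in $\FP(X)$ nor, in fact, the $T_1$ hypothesis: the $T_1$ assumption would become relevant only if one instead mirrored the free topological group proof of Proposition 7.4.8 of~\cite{Arhan}, where one first establishes that each $\FP_n(X)$ is closed, so that the complement of $W$ becomes a countable union of closed sets and hence closed in the $P$-space $\FP(X)$. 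Both routes terminate in the same $F_\sigma$/$G_\delta$ observation, but the direct construction is cleaner and assumes less.
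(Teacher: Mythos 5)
Your proof is correct, but it follows a genuinely different route from the paper's. The paper argues dually with closed sets: it takes $C$ with $C\cap\FP_n(X)$ closed in $\FP_n(X)$ for all $n$, invokes Theorem~4.1.3 of~\cite{Elfard1} to get that each $\FP_n(X)$ is closed in $\FP(X)$ (this is where the $T_1$ hypothesis enters), concludes that $C=\bigcup_n\bigl(C\cap\FP_n(X)\bigr)$ is a countable union of sets closed in $\FP(X)$, and finishes by noting that such a union is closed in a $P$-space. Your argument instead works directly with the open-set formulation of the inductive limit, building around each point $w\in W$ the $G_\delta$ neighbourhood $G=\bigcap_{n\ge m}U_n\subseteq W$; it uses only that the cover is countable, increasing and exhaustive, together with Proposition~\ref{pspace}. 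The trade-off is clear: the paper's proof is shorter because it leans on the external closedness theorem, while yours is self-contained at that point and, as you correctly observe, dispenses with the $T_1$ hypothesis entirely, so it actually establishes a formally stronger statement. Your closing remark accurately reconstructs the paper's intended argument. One small presentational point: it would be worth stating explicitly that $w\in\FP_m(X)\subseteq\FP_n(X)$ for $n\ge m$ is what puts $w$ in each $U_n$, since the monotonicity of the chain $\{\FP_n(X)\}$ is doing real work there.
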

\begin{proof}
We prove the statement for $\FP(X)$, since the proof for $\AP(X)$ is similar. Let $C$ be a subset of $\FP(X)$ such that $C\cap \FP_n(X)$ is closed in $\FP_n(X)$ for all $n\in\N$. By Theorem~4.1.3 of~\cite{Elfard1}, the sets $\FP_n(X)$ are closed in $\FP(X)$ for all $n\in\N$. Thus the sets $C\cap \FP_n(X)$ are closed in $\FP(X)$ for all $n\in\N$, which implies that $C$ is a countable union of closed sets in $\FP(X)$. Since the group $\FP(X)$ is a $P$-space, $C$ is closed in $\FP(X)$ and then $\FP(X)$ is the inductive limit of the collection $\{\FP_n(X)\colon n\in \N\}$.
\end{proof}

The author wishes to thank associate professor Peter Nickolas for helpful comments.

\def\cprime{$'$} \def\cprime{$'$} \def\cprime{$'$} \def\cprime{$'$}
  \def\cprime{$'$}
\providecommand{\bysame}{\leavevmode\hbox to3em{\hrulefill}\thinspace}

\end{document}